\newtheorem{thm}{Theorem}[section]
\newtheorem{prop}[thm]{Proposition}
\newtheorem{lem}[thm]{Lemma}
\newtheorem{cor}[thm]{Corollary}
\newtheorem{rem}[thm]{Remark}
\newcommand{\lam}[1]{\lambda_}
\newcommand{\bea}{\begin{eqnarray}}
\newcommand{\eea}{\end{eqnarray}}
\theoremstyle{definition}%
\newtheorem{asser}[thm]{Assertion}
\newcommand{\N}{\boldsymbol{N}}
\def\bA{{\boldsymbol{A}}}
\numberwithin{equation}{section}
\journal{Nonlinearity}
\begin{document}

\begin{frontmatter}
\title{Criterion for quasi-anosovian diffeomorphisms of closed manifolds}

\author{Xiongping Dai}
\ead{xpdai@nju.edu.cn}


\address{Department of Mathematics, Nanjing University, Nanjing 210093, People's Republic of China}

\begin{abstract}
Quasi-Anosov diffeomorphism is a kind of important dynamical system due to R.~Ma\~{n}\'{e} 1970s, although which is weaker than Anosov, yet has very stable dynamical behaviors---Axiom A and the no cycle condition. In this paper, we present a criterion for such dynamics using ergodic theory.
\end{abstract}

\begin{keyword}
Anosovian and quasi-anosovian diffeomorphism\sep subadditive random process.

\medskip
\MSC[2010] 37D20.
\end{keyword}
\end{frontmatter}

\section{Introduction}\label{sec1}%

Throughout this paper, let $M$ be a closed (i.e. smooth, compact, boundaryless) manifold of dimension $\dim M\ge2$. For a $\mathrm{C}^1$-diffeomorphism $f\colon M\rightarrow M$ of $M$, as usual $f$ is called \textit{Anosov} if there exists a (continuous) $T\!f$-invariant splitting of the tangent bundle $T\!{M}$ into subbundles
\begin{equation*}
T_xM=E^s(x)\oplus E^u(x)\quad \forall x\in M,
\end{equation*}
and if there are constants $C>0, \lambda>0$ such that
\begin{equation*}
\pmb{\|}T_xf^n|E^s(x)\pmb{\|}\le C e^{-n\lambda}\quad \textrm{and}\quad\pmb{\|}T_xf^n|E^u(x)\pmb{\|}_{\min}\ge C^{-1} e^{n\lambda}
\end{equation*}
for all $n\ge0$ and any $x\in M$.

We say that $f$ is \textit{quasi-Anosov} if for any $x\in M$ and any unit tangent vector $v\in T_xM$, the bi-sided infinite sequence $\{\|T_xf^n(v)\|\}_{-\infty<n<+\infty}$ is unbounded (cf.~Ma\~{n}\'{e}~\cite{M77}).

Although a quasi-Anosov diffeomorphism is strictly weaker than an Anosov system from the counterexample of Franks and Robinson~\cite{FR}, it still implies the very strong dynamical behaviors: Axiom A and the no cycle condition (cf.~Ma\~{n}\'{e}~\cite{M-LNM}).

By $\mathcal{M}_f$ it means the set of all $f$-invariant Borel probability measures on $M$. Based on the classical paper \cite{Atk} and the recent work~\cite{Dai11, Dai-pre}, we can now present a criterion for quasi-Anosov diffeomorphisms as follows.

\begin{thm}\label{thm1.1}
Let $T\!{M}$ have $T\!f$-invariant splitting $T_xM=E^s(x)\oplus E^{cu}(x)\, \forall x\in M$ such that $x\mapsto E^{cu}(x)$ is ``a.e.'' continuous and $x\mapsto E^s(x)$ is measurable. Assume
\begin{align}
\limsup_{n\to\infty}\|T_xf^n(v)\|<1&&\forall v\in E^s(x)\textrm{ and }\|v\|=1\label{eq1.1}
\intertext{and}
\liminf_{n\to\infty}\|T_xf^n(v)\|>1&&\forall v\in E^{cu}(x)\textrm{ and }\|v\|=1\label{eq1.2}
\end{align}
for ``a.e.'' $x\in M$ (where ``a.e.'' is in the sense of all $\mu\in\mathcal{M}_f$). Then $f$ is quasi-Anosov. And there exist two constants $K>0$ and $\varrho>1$ such that to any $x\in M$ there corresponds some $\ell(x)>0$ verifying that
\begin{align*}
&\|T_xf^{n+m}(v)\|\ge K\varrho^m\|T_xf^n(v)\|\quad\forall v\in E^{cu}(x)\textrm{ and }n\ge\ell(x),\\
\intertext{and}
&\|T_xf^{n-m}(v)\|\ge K\varrho^{-m}\|T_xf^n(v)\|\quad\forall v\in E^{s}(x)\textrm{ and }n\le-\ell(x),
\end{align*}
for all $m\ge0$.
\end{thm}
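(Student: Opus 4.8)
The plan is to transfer the problem to the projectivised tangent bundle, to extract the correct sign from \eqref{eq1.1} and \eqref{eq1.2} by means of Atkinson's recurrence theorem~\cite{Atk}, and then to feed the outcome into the structure theorems for subadditive random processes of \cite{Dai11,Dai-pre}. Fix an $f$-invariant Borel set $M_0\subseteq M$ with $\mu(M_0)=1$ for every $\mu\in\mathcal{M}_f$ on which $x\mapsto E^{cu}(x)$ is continuous and \eqref{eq1.1}, \eqref{eq1.2} hold. On the compact space $\mathbb{P}T\!M$ consider the homeomorphism $\hat f(x,[v])=(fx,[T_xf(v)])$ and the continuous cocycle $g(x,[v])=\log\bigl(\|T_xf(v)\|/\|v\|\bigr)$, so that $S_ng(x,[v]):=\sum_{k=0}^{n-1}g\bigl(\hat f^{\,k}(x,[v])\bigr)=\log\|T_xf^{n}(v)\|-\log\|v\|$. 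With $\pi\colon\mathbb{P}T\!M\to M$ the projection, set $\Lambda^{cu}=\overline{\{(x,[v])\colon x\in M_0,\ 0\neq v\in E^{cu}(x)\}}$; this is a compact $\hat f$-invariant set, and continuity of $E^{cu}$ on $M_0$ gives $\pi^{-1}(M_0)\cap\Lambda^{cu}=\{(x,[v])\colon x\in M_0,\ 0\neq v\in E^{cu}(x)\}$.

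The heart of the matter, and what I expect to be the main obstacle, is the dichotomy showing that every $\hat f$-invariant Borel probability $\hat\nu$ carried by $\Lambda^{cu}$ has $\int g\,d\hat\nu>0$. One reduces to ergodic $\hat\nu$; its projection lies in $\mathcal{M}_f$, so $\mu(M_0)=1$, hence $\hat\nu$ is carried by the genuine part of $\Lambda^{cu}$ and \eqref{eq1.2} forces $\liminf_nS_ng(z)>0$ for $\hat\nu$-a.e.\ $z$. If $\int g\,d\hat\nu<0$ then $S_ng(z)\to-\infty$ a.e.\ by Birkhoff's theorem, a contradiction; and if $\int g\,d\hat\nu=0$ then Atkinson's theorem~\cite{Atk} gives $\liminf_n|S_ng(z)|=0$ a.e., hence $\liminf_nS_ng(z)\le0$ a.e., again a contradiction. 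Since $\mathcal{M}_{\hat f}(\Lambda^{cu})$ is weak$^{*}$ compact and $\hat\nu\mapsto\int g\,d\hat\nu$ is continuous, $\int g\,d\hat\nu\ge c>0$ on $\mathcal{M}_{\hat f}(\Lambda^{cu})$; a standard compactness argument, identifying $\lim_n\tfrac1n\inf_{z\in\Lambda^{cu}}S_ng(z)$ with $\inf_{\hat\nu}\int g\,d\hat\nu$, then yields $N_0$ with $S_ng(z)\ge cn$ for all $z\in\Lambda^{cu}$ and $n\ge N_0$. Specialising to $z=(x,[v])$ with $x\in M_0$ and $0\neq v\in E^{cu}(x)$ — all such $z$ lie in $\Lambda^{cu}$ — yields $\log\|T_xf^{n}|E^{cu}(x)\|_{\min}\ge cn$ for every $x\in M_0$ and $n\ge N_0$; in particular every Lyapunov exponent of the superadditive random process $n\mapsto\log\|T_xf^{n}|E^{cu}(x)\|_{\min}$ over $(M,f)$ is $\ge c>0$. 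The symmetric statement — that every Lyapunov exponent of the subadditive random process $n\mapsto\log\|T_xf^{n}|E^{s}(x)\|$ is $\le-c<0$ — follows from \eqref{eq1.1} by the same dichotomy, carried out purely measure-theoretically (which is legitimate: Atkinson's theorem requires no topology, so the fact that $E^{s}$ is only measurable is no obstruction), the attendant uniformity being supplied by \cite{Dai11}.

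Next, one invokes \cite{Dai11,Dai-pre}: for a superadditive random process over $(M,f)$ all of whose Lyapunov exponents are $\ge c>0$ there are $K>0$, $\varrho=e^{c}>1$ and, to each $x\in M$, an $\ell(x)>0$ such that $\log\|T_{f^{n}x}f^{m}|E^{cu}(f^{n}x)\|_{\min}\ge\log K+m\log\varrho$ for all $n\ge\ell(x)$, $m\ge0$. (By the previous paragraph one may take $\ell(x)\equiv1$ when $x\in M_0$; the substance of \cite{Dai11,Dai-pre} is the treatment of points whose forward orbit avoids $M_0$, and it is the lack of a continuous version of the process there that makes $\ell(x)$ genuinely point-dependent rather than uniform — the uniform alternative would make $f$ Anosov, which is excluded by the example of Franks and Robinson~\cite{FR}.) Applying the inequality to $w=T_xf^{n}(v)\in E^{cu}(f^{n}x)$, which is $T\!f$-invariantly defined for \emph{every} $x$, gives $\|T_xf^{n+m}(v)\|\ge\|T_{f^{n}x}f^{m}|E^{cu}(f^{n}x)\|_{\min}\,\|T_xf^{n}(v)\|\ge K\varrho^{m}\|T_xf^{n}(v)\|$ for all $v\in E^{cu}(x)$, $n\ge\ell(x)$, $m\ge0$, which is the first asserted estimate; applying \cite{Dai11,Dai-pre} to $f^{-1}$ together with the subadditive process $n\mapsto\log\|T_xf^{n}|E^{s}(x)\|$ gives, after enlarging $\ell(x)$ and shrinking $K$, the stated stable estimate and, simultaneously, eventual uniform forward contraction of $E^{s}$.

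Finally, quasi-Anosovness is a short consequence. Let $x\in M$ and let $v\in T_xM$ be a unit vector, written $v=v^{s}+v^{cu}$ with $v^{s}\in E^{s}(x)$, $v^{cu}\in E^{cu}(x)$. If $v^{cu}\neq0$ then, for $n\ge\ell(x)$, the $E^{cu}$-estimate gives $\|T_xf^{n}v^{cu}\|\ge K\varrho^{\,n-\ell(x)}\|T_xf^{\ell(x)}v^{cu}\|\to\infty$ while $\|T_xf^{n}v^{s}\|$ stays bounded by the forward contraction, whence $\|T_xf^{n}v\|\ge\|T_xf^{n}v^{cu}\|-\|T_xf^{n}v^{s}\|\to\infty$ as $n\to+\infty$; if $v=v^{s}\neq0$ then the stable estimate run in backward time gives $\|T_xf^{-n}v\|\to\infty$. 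Hence $\{\|T_xf^{n}(v)\|\}_{-\infty<n<+\infty}$ is unbounded for every $x$ and every unit $v\in T_xM$, so $f$ is quasi-Anosov, and the proof is complete.
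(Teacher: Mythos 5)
Your first half (the projectivised-bundle dichotomy via Atkinson's theorem, forcing positive exponents on $E^{cu}$ and negative ones on $E^s$ for every invariant measure) runs parallel to the paper's use of Lemma~\ref{lem2.1}/\ref{lem2.3} and Proposition~\ref{prop3.2}, and is fine as far as it goes. The gap is in the second half, where you ``invoke \cite{Dai11,Dai-pre}'' for the statement that a superadditive process with exponents bounded below by $c>0$ admits \emph{uniform} constants $K>0$, $\varrho>1$ valid at \emph{every} $x\in M$ beyond a point-dependent threshold $\ell(x)$. Neither reference contains such a result: the semi-uniform subadditive ergodic theorem (Lemma~\ref{lem3.1}) and the refined MET (Lemma~\ref{lem2.5}) only upgrade ``a.e.\ all $\mu\in\mathcal{M}_f$'' statements to pointwise statements of the form $\limsup_n \frac1n\varphi_n(x)<0$, with constants depending on $x$; they give no uniform $K,\varrho$. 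Your own compactness argument on $\Lambda^{cu}$ gives uniform expansion only over directions in the closure of the graph of $E^{cu}$ over $M_0$, which need not contain $E^{cu}(x)$ for $x\notin M_0$ (and a point's forward orbit may avoid $M_0$ entirely, since $M\setminus M_0$ is only null for invariant measures, not empty). So the inequality you apply to $w=T_xf^n(v)$ for arbitrary $x\in M$ is exactly the hard part of the theorem, cited rather than proved. The paper's logic is the reverse of yours: it first proves quasi-Anosovness, using only the pointwise (non-uniform) hyperbolicity at every $x$ supplied by Lemma~\ref{lem3.1}, and only then obtains $K$, $\varrho$, $\ell(x)$ from Ma\~n\'e's quasi-hyperbolicity (Assertion~\ref{asser3.6}, the uniform doubling time $\N$) together with the Wen--Gan altitude lemma (Assertion~\ref{asser3.8}) and compactness of $M$; the uniformity comes from $\N$, not from the ergodic-theoretic lemmas. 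As written, your derivation of quasi-Anosovness rests on an estimate whose only known proof passes through quasi-Anosovness.

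A second, related defect is your treatment of the stable bundle: you say the ``attendant uniformity'' for the subadditive process $n\mapsto\log\pmb{\|}T_xf^n|E^s(x)\pmb{\|}$ is ``supplied by \cite{Dai11}'', but Lemma~\ref{lem3.1} requires the functions $\varphi_n$ to be a.e.\ continuous, and $E^s$ is only assumed measurable, so that process is not admissible. (Atkinson's theorem indeed needs no topology, but it only yields the a.e.\ statement of Assertion~\ref{asser3.5}; it cannot promote it to every $x$, which you need both for the stable estimate with $n\le-\ell(x)$ and for the boundedness of $\|T_xf^nv^s\|$ in your final unboundedness argument at an arbitrary $x$.) The paper circumvents exactly this obstruction by replacing $E^s$ with the orthogonal complement $\widehat{E}^{s}(x)$ of $E^{cu}(x)$ --- which inherits a.e.\ continuity from $E^{cu}$ --- and working with the orthogonally projected cocycle $\widehat{F}^{s}$, to which Liao's multiplicative ergodic theorem and Lemma~\ref{lem3.1} do apply. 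Your proposal needs either this substitution or some substitute for it; without it, the everywhere-defined stable estimates and hence the quasi-Anosov conclusion are not established.
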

\begin{rem}
If instead $T_xM=E^s(x)\oplus E^{cu}(x)$ is such that $x\mapsto E^{s}(x)$ is ``a.e.'' continuous and $x\mapsto E^{cu}(x)$ is measurable, then the statements of Theorem~\ref{thm1.1} also hold.
\end{rem}

Our conditions (\ref{eq1.1}) and (\ref{eq1.2}) look much more weaker than the domination property of the splitting $E^s\oplus E^{cu}$. To prove this theorem, we shall first show that $f$ is nonuniformly hyperbolic for a.e. $x$ in $M$ using ergodic theory, and then using ergodic theory again we will extend the nonuniform hyperbolicity from a.e. $x\in M$ to every $x\in M$. This implies that $f$ is quasi-Anosov.

The remains of this paper will be simply organized as follows. We will introduce our main ergodic-theoretic tools in Section~\ref{sec2} and then we will prove Theorem~\ref{thm1.1} in Section~\ref{sec3}.

\section{Non-oscillatory behavior of a subadditive random process}\label{sec2}
To prove our Theorem~\ref{thm1.1}, we will need a result similar to Giles Atkinson's theorem on additive cocycles~\cite{Atk}. Atkinson's theorem (together with a result of K.~Schmidt) asserts the following.

\begin{lem}\label{lem2.1}
If $T\colon(X,\mathscr{F},\mu)\rightarrow(X,\mathscr{F},\mu)$ is an ergodic measure-preserving automorphism and $f\colon X\rightarrow\mathbb{R}$ is an integrable function with $\int_Xfd\mu=0$, then for $\mu$-a.e. $x\in X$ the sum $\sum_{k=0}^{n-1}f(T^kx)$ returns arbitrarily close to zero infinitely often.
\end{lem}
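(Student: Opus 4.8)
The plan is to reduce the statement to the conservativity of the $\Real$-extension of $T$ by $f$. Write $\lambda$ for Lebesgue measure on $\Real$ and define the skew product $\Phi\colon X\times\Real\to X\times\Real$ by $\Phi(x,t)=(Tx,\,t+f(x))$; one checks at once that $\Phi$ is invertible, preserves the $\sigma$-finite measure $\mu\times\lambda$, and satisfies $\Phi^{n}(x,t)=(T^{n}x,\,t+S_{n}f(x))$ where $S_{n}f(x)=\sum_{k=0}^{n-1}f(T^{k}x)$. I claim it is enough to prove that $\Phi$ is conservative. Granting this, apply the recurrence theorem for conservative transformations to the finite-measure set $E_{m}=X\times[-1/m,1/m]$: for $(\mu\times\lambda)$-a.e.\ $(x,t)\in E_{m}$ there are infinitely many $n\ge1$ with $\Phi^{n}(x,t)\in E_{m}$, i.e.\ $\abs{t+S_{n}f(x)}\le 1/m$, so that $\abs{S_{n}f(x)}\le 2/m$ for infinitely many $n$. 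By Fubini this holds, for $\mu$-a.e.\ $x$, for $\lambda$-a.e.\ $t\in[-1/m,1/m]$, hence for at least one such $t$; thus $\liminf_{n}\abs{S_{n}f(x)}\le 2/m$ for $\mu$-a.e.\ $x$, and since $m$ is arbitrary, $\liminf_{n}\abs{S_{n}f(x)}=0$ for $\mu$-a.e.\ $x$ --- which is precisely the assertion that $\sum_{k=0}^{n-1}f(T^{k}x)$ returns arbitrarily close to $0$ infinitely often.

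To show $\Phi$ is conservative I would first prove the dichotomy: \emph{$\Phi$ is either conservative or totally dissipative}. Let $X\times\Real=\mathcal{C}\sqcup\mathcal{D}$ be the Hopf decomposition of $\Phi$ into its conservative and dissipative parts; both are $\Phi$-invariant modulo null sets. The vertical translations $R_{s}(x,t)=(x,t+s)$ ($s\in\Real$) preserve $\mu\times\lambda$ and satisfy $\Phi\circ R_{s}=R_{s}\circ\Phi$, so each $R_{s}$ carries the conservative part of $\Phi$ onto itself: $R_{s}\mathcal{C}=\mathcal{C}$ mod null for every $s$. A measurable subset of $X\times\Real$ that is invariant mod null under all $R_{s}$ must coincide, up to a null set, with $C\times\Real$ for some $C\in\mathscr{F}$ (a brief Fubini argument), and since $\mathcal{C}=C\times\Real$ is $\Phi$-invariant and $\Phi$ projects to $T$, the set $C$ is $T$-invariant; ergodicity of $T$ now forces $\mu(C)\in\set{0,1}$. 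Hence $\Phi$ is conservative --- and we are done --- or else it is totally dissipative, the case that remains.

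Suppose then that $\Phi$ is totally dissipative. By the structure theory of $\sigma$-finite dissipative automorphisms, $\Phi$ is measurably isomorphic to $\mathrm{id}_{W}\times(\text{unit shift})$ on $W\times\mathbb{Z}$ for some wandering set $W$ of positive finite measure; consequently, for $(\mu\times\lambda)$-a.e.\ $(x,t)$ the orbit spends only finitely many iterates in each finite-measure set $X\times[-R,R]$, which forces $\abs{t+S_{n}f(x)}\to\infty$ as $\abs{n}\to\infty$, and therefore (Fubini again) $\abs{S_{n}f(x)}\to\infty$ as $\abs{n}\to\infty$ for $\mu$-a.e.\ $x$. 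It remains to contradict the hypothesis $\int_{X}f\,d\mu=0$, and this is the substantive step --- the core of the theorems of Atkinson and Schmidt. By ergodicity, a.e.\ $x$ lies in exactly one of the $T$-invariant sets $X^{+}=\set{x:S_{n}f(x)\to+\infty}$, $X^{-}=\set{x:S_{n}f(x)\to-\infty}$, or $X^{0}=X\setminus(X^{+}\cup X^{-})$. On $X^{+}$ (and symmetrically on $X^{-}$) the function $\min_{n\ge0}S_{n}f$ is a.e.\ finite and, passing to a first-return (induced) transformation where necessary, Hopf's maximal ergodic inequality applied to $\pm f$ --- together with a Kac-type identity comparing $\int_{X}f$ to the increments of that minimum --- pushes $\int_{X}f$ strictly to one side of $0$; the oscillating case $X^{0}$ is excluded by the same circle of ideas. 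I expect this last step, converting ``$\abs{S_{n}f(x)}\to\infty$ a.e.'' into a violation of the zero-mean hypothesis, to be the main obstacle: Paragraph~1 and the conservative/dissipative dichotomy are soft, whereas here one must handle the $L^{1}$ (rather than bounded) nature of $f$ and carry out the careful bookkeeping with induced maps and the maximal ergodic theorem that is the genuine content of the Atkinson--Schmidt result.
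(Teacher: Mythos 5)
Your first two paragraphs (conservativity of the skew product $\Phi(x,t)=(Tx,t+f(x))$ implies the recurrence statement via Halmos' recurrence theorem and Fubini; the Hopf decomposition is invariant under the vertical translations $R_s$, hence of product form, hence trivial by ergodicity of $T$) are correct but soft, and the paper itself offers no proof of Lemma~\ref{lem2.1} at all --- it is quoted from Atkinson and Schmidt. The problem is that your reduction stops exactly at the theorem's actual content. The statement ``$\int_X f\,d\mu=0$ implies the $\mathbb{R}$-extension $\Phi$ is conservative'' is not an auxiliary fact one can wave at: it is Schmidt's formulation of Atkinson's theorem, equivalent to the lemma you are asked to prove. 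Your third paragraph, which is supposed to derive a contradiction from ``$\abs{S_nf(x)}\to\infty$ a.e.\ and $\int f\,d\mu=0$'', consists of an appeal to ``Hopf's maximal ergodic inequality applied to $\pm f$ together with a Kac-type identity'' with no argument actually carried out, and you acknowledge as much. Note in particular that Birkhoff only gives $\int f\ge 0$ on the event $S_nf\to+\infty$; excluding equality, and excluding the oscillating case where $\abs{S_nf}\to\infty$ without eventual constant sign (which is not ruled out for $L^1$ cocycles by any soft argument), is precisely the nontrivial step. So the proposal is a reformulation plus routine bookkeeping, with the core missing.

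For completeness: the gap can be closed without any skew-product machinery by Atkinson's original argument, which is short. Suppose the conclusion fails on a set of positive measure; then there exist $\eps>0$ and a set $A$ with $\mu(A)>0$ and $N$ such that $\abs{S_nf(x)-S_mf(x)}\ge\eps$ whenever $x\in A$, $T^mx\in A$, $n>m\ge 0$ are return times (after replacing $A$ by a suitable subset). Along the successive returns $0\le n_1<n_2<\dotsb$ of $x\in A$ to $A$, the numbers $S_{n_i}f(x)$ are $\eps$-separated, so $\max_{i\le k}\abs{S_{n_i}f(x)}\ge \eps k/2$; since by the Birkhoff ergodic theorem the return times satisfy $n_k/k\to 1/\mu(A)$ and $S_nf(x)/n\to\int f\,d\mu=0$ a.e., this forces $\limsup_n\abs{S_nf(x)}/n\ge\eps\mu(A)/2>0$ on a positive-measure set, a contradiction. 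Either supply an argument of this kind, or an honest proof that zero mean forces conservativity of $\Phi$; as written, the proposal does not prove the lemma.
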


Atkinson's theorem has recently been extended for quasi-additive potentials~\cite{Dai-13}; and see \cite[Theorem~2.4]{Dai-jde} for a generalization for bounded subadditive process.

For a general, not necessarily bounded, subadditive process, the following similar lemma has not previously been formally published, but arose in discussion between Dr. Vaughn Climenhaga and Dr. Ian Morris on the MathOverflow internet forum, where their proof is adapted from G.~Atkinson's argument.\footnote{Cf.~http://mathoverflow.net/questions/70676/ for the details.}

\begin{lem}[Climenhaga and Morris]
Let $T$ be an ergodic measure-preserving transformation of a probability space $(X,\mathscr{F},\mu)$, and let $(f_n)_{n\ge1}$ be a sequence of integrable functions from $X$ to $\mathbb{R}$, which satisfies the subadditivity relation:
\begin{equation*}
f_{n+m}(x)\le f_n(T^mx)+f_m(x)\quad \textrm{for }\mu\textrm{-a.e. }x\in X\textrm{ and }n,m\ge1.
\end{equation*}
Suppose that ${\lim}_{n\to\infty}f_n(x)=-\infty$ for $\mu$-a.e. $x\in X$. Then
${\lim}_{n\to\infty}n^{-1}\int_Xf_n(x)d\mu(x)<0$.
\end{lem}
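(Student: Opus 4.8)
The plan is to reduce to Kingman's subadditive ergodic theorem and then, in the one remaining case, to adapt the recurrence argument behind Lemma~\ref{lem2.1}. By Kingman's theorem the limit $\ell:=\lim_{n\to\infty}n^{-1}\int_X f_n\,d\mu=\inf_{n\ge1}n^{-1}\int_X f_n\,d\mu$ exists in $[-\infty,+\infty)$ and $n^{-1}f_n(x)$ converges $\mu$-a.e.\ to a $T$-invariant function, hence by ergodicity to the constant $\ell$ (interpreted as $n^{-1}f_n\to-\infty$ when $\ell=-\infty$). If $\ell=-\infty$ we are done; if $\ell>0$ then $f_n(x)\to+\infty$ for $\mu$-a.e.\ $x$, against the hypothesis; so everything comes down to ruling out $\ell=0$. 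Assume $\ell=0$. Since $n\mapsto\int_X f_n\,d\mu$ is subadditive with vanishing Ces\`aro mean, $\int_X f_n\,d\mu\ge0$ for all $n$. In the special case $\int_X f_1\,d\mu=0$ a contradiction is immediate: the potential $h_n:=\sum_{i=0}^{n-1}f_1\circ T^i-f_n$ is nonnegative (telescoped subadditivity) and superadditive, with $\int_X h_n\,d\mu=-\int_X f_n\,d\mu\le0$, whence $h_n\equiv0$; thus $f_n$ equals the Birkhoff sums of $f_1$, which by Lemma~\ref{lem2.1} cannot tend to $-\infty$. The real work is the case $\int_X f_1\,d\mu>0$.

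For that case I would argue by recurrence. Using $f_n\to-\infty$ $\mu$-a.e., the sets $\{x:f_n(x)\le-1\text{ for all }n\ge N\}$ increase mod $\mu$ to a full-measure set, so there are $N_0\ge1$ and a set $C$, $\mu(C)>0$, on which $f_n\le-1$ for every $n\ge N_0$; this property passes to every positive-measure subset of $C$. Let $T_C$ be the ergodic first-return map of $T$ to $C$, $r$ the first-return time, and $r_k(x)=\sum_{j=0}^{k-1}r(T_C^{\,j}x)$ the $k$-th return time, so $r_k(x)/k\to1/\mu(C)$ $\mu_C$-a.e.\ by Kac's lemma. Telescoping subadditivity along the times $r_1(x)<r_2(x)<\cdots$ gives
\begin{equation*}
f_{r_k(x)}(x)\ \le\ \sum_{j=0}^{k-1}f_{r(T_C^{\,j}x)}\!\left(T_C^{\,j}x\right),
\end{equation*}
and every summand with $r(T_C^{\,j}x)\ge N_0$ is $\le-1$, because $T_C^{\,j}x\in C$. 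If one can show the remaining (short-return) summands do not overwhelm these, i.e.\ that the right-hand side is $\le-\varepsilon k$ for some $\varepsilon>0$ and all large $k$, then $r_k(x)^{-1}f_{r_k(x)}(x)\le-\varepsilon\,\mu(C)+o(1)<0$, contradicting $n^{-1}f_n(x)\to0$ along the subsequence $n=r_k(x)$ --- and the proof is complete.

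The main obstacle is exactly the estimate just flagged. Since only subadditivity, not a cocycle identity, is available, the short-return terms $f_{r(T_C^{\,j}x)}(T_C^{\,j}x)$ with $r(T_C^{\,j}x)<N_0$ need not be negative, and short returns to a positive-measure set generally occur with positive asymptotic frequency, so they do compete with the long-return gains. I would resolve this by passing to the \emph{delayed} return times $t_0:=0$, $t_{i+1}:=\min\{n\ge t_i+N_0:T^nx\in C\}$: each increment has length $\ge N_0$ and starts in $C$, so $f_{t_k(x)}(x)\le-k$, and it remains only to bound $t_k(x)/k$ above (which is immediate when all first returns to $C$ are shorter than $N_0$, and in general should follow by truncating the hitting time of $C$ and applying the ergodic theorem). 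Alternatively one can recast the whole question as the conservativity of an associated skew product on $X\times\mathbb R$ and adapt Atkinson's Hopf-decomposition argument to the subadditive setting. Either way, the crux is the same: controlling the short-return fluctuations so that the negative drift supplied by the long returns survives.
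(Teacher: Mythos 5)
The paper itself never proves this lemma---it is quoted from the Climenhaga--Morris MathOverflow discussion and the paper instead invokes the stronger Lemma~\ref{lem2.3} from [Dai-pre]---so your argument can only be judged on its own merits. Your frame is right and is in the spirit of the Atkinson-style induction the footnote alludes to: Kingman reduces everything to excluding the Kingman constant $\ell=0$, the case $\ell>0$ is trivially contradictory, and your treatment of the special case $\int f_1\,d\mu=0$ (nonnegative defect $h_n=\sum_{i<n}f_1\circ T^i-f_n$ with $\int h_n\le0$, hence $f_n$ is a Birkhoff sum, contradicting Lemma~\ref{lem2.1}) is correct. But as you yourself flag, the main case is not proved: the short-return terms really can spoil the naive telescoping along first returns, and your proposed repair is left hanging at exactly the decisive point. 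Moreover the specific mechanism you suggest---``truncating the hitting time of $C$ and applying the ergodic theorem''---does not work as stated, because the points $T^{t_i+N_0}x$ at which the hitting times are evaluated are the orbit of the delayed-return map on $C$, which does not preserve $\mu_C$ (or any obvious finite invariant measure), so the Birkhoff theorem cannot be applied to those averages directly. So there is a genuine gap: the bound $\limsup_k t_k(x)/k<\infty$ is asserted, not proved.

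The good news is that the gap is fillable by an elementary comparison, so your delayed-return idea does carry the proof. For $x\in C$ the window $(t_i,t_{i+1}]$ contains at most $N_0$ visits of the orbit to $C$: at most $N_0-1$ at times strictly between $t_i$ and $t_i+N_0$, none in $[t_i+N_0,t_{i+1})$ by minimality, and one at $t_{i+1}$. Since $T^{t_k}x\in C$, the time $t_k$ is itself the $m$-th return time to $C$ for some $m\le kN_0$, whence $t_k(x)\le r_{kN_0}(x)$, where $r_j$ denotes the $j$-th return time. By Kac's lemma and the Birkhoff theorem for the induced map $T_C$, $r_j(x)/j\to1/\mu(C)$ for $\mu_C$-a.e.\ $x$, so $\limsup_k t_k(x)/k\le N_0/\mu(C)$. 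Combined with your telescoped bound $f_{t_k(x)}(x)\le-k$, this gives $\liminf_{n\to\infty}n^{-1}f_n(x)\le-\mu(C)/N_0<0$ on a set of positive measure, contradicting $n^{-1}f_n\to\ell=0$ a.e. Note that once this is in place the dichotomy $\int f_1\,d\mu=0$ versus $\int f_1\,d\mu>0$ is unnecessary: the delayed-return argument disposes of the whole case $\ell=0$ at once, and the skew-product/Hopf route you mention as an alternative is not needed.
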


To prove our Theorem~\ref{thm1.1}, we shall need the following more general version proved in the recent paper~\cite[Theorem~2.7]{Dai-pre}.

\begin{lem}[\cite{Dai-pre}]\label{lem2.3}
Let $T$ be a measure-preserving, not necessarily ergodic, transformation of a probability space $(X,\mathscr{F},\mu)$, and let $(f_n)_{n\ge1}$ be a sequence of measurable functions from $X$ to $\mathbb{R}\cup\{-\infty\}$ with $f_1^+\in L^1(\mu)$, which satisfies the subadditivity relation:
\begin{equation*}
f_{n+m}(x)\le f_n(T^mx)+f_m(x)\quad \textrm{for }\mu\textrm{-a.e. }x\in X\textrm{ and }n,m\ge1.
\end{equation*}
Let $F(x)={\limsup}_{n\to\infty}f_n(x)$ for $x\in X$.
Then the symmetric difference
\begin{equation*}
\left\{x\in X\,|\,F(x)<0\right\}\vartriangle\left\{x\in X\,|\,{\lim}_{n\to\infty}n^{-1}f_n(x)<0\right\}
\end{equation*}
has $\mu$-measure $0$.
\end{lem}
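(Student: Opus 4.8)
The plan is to reduce the assertion to the case where $T$ is ergodic by invoking the ergodic decomposition of $\mu$, and then to prove it by running a subadditive version of Atkinson's recurrence argument. The two sets $\{F<0\}$ and $\{x:\lim_n n^{-1}f_n(x)<0\}$ are defined intrinsically from $(f_n)$ and $T$, so writing $\mu=\int\mu_\omega\,dP(\omega)$ for the ergodic decomposition, the $\mu$-measure of their symmetric difference equals the integral over $\omega$ of the corresponding $\mu_\omega$-measures; and for $P$-a.e.\ $\omega$ one has $f_1^+\in L^1(\mu_\omega)$ (by Fubini) and the subadditivity relation valid $\mu_\omega$-a.e., so it is enough to treat a single ergodic component. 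Assume then that $T$ is ergodic on $(X,\mathscr{F},\mu)$. By Kingman's subadditive ergodic theorem --- which needs only $f_1^+\in L^1(\mu)$ and does not require $T$ to be invertible --- there is a constant $\Lambda_0\in[-\infty,\infty)$ with $n^{-1}f_n(x)\to\Lambda_0$ for $\mu$-a.e.\ $x$. If $\Lambda_0<0$, then $f_n(x)\to-\infty$, hence $F(x)=-\infty$, for $\mu$-a.e.\ $x$, so both sets are $\mu$-conull and their symmetric difference is $\mu$-null. The essential case is $\Lambda_0\ge0$, where $\{x:\lim_n n^{-1}f_n(x)<0\}$ is $\mu$-null and it remains to show that $\{F<0\}$ is $\mu$-null.

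For this, set $A_{\eps,N}:=\{x:f_n(x)\le-\eps\text{ for all }n\ge N\}$; then $\{F<0\}=\bigcup_{N\ge1}\bigcup_{k\ge1}A_{1/k,N}$, so it suffices to show $\mu(A_{\eps,N})=0$ for every $\eps>0$ and $N\ge1$. Suppose, to get a contradiction, that $\mu(A)>0$ where $A=A_{\eps,N}$. The first-return transformation $T_A$ of $A$ is measure preserving and ergodic for $\mu_A:=\mu(\,\cdot\,\cap A)/\mu(A)$, with first-return time $r$ (and these facts hold without any invertibility of $T$); by Kac's formula $\int_A r\,d\mu_A=1/\mu(A)$, so Birkhoff's theorem for $T_A$ gives, for $\mu_A$-a.e.\ $x\in A$, that the successive return times $0=m_0<m_1<\cdots$ of $x$ to $A$ satisfy $m_k/k\to1/\mu(A)$. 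Fix such an $x$ which is moreover Kingman-generic and whose forward orbit avoids the subadditivity-exceptional null set. Define indices $0=j_0<j_1<\cdots$ greedily by $j_{i+1}=\min\{j>j_i:m_j-m_{j_i}\ge N\}$; since every return gap is $\ge1$ this forces $j_{i+1}-j_i\le N$, hence $j_i\le Ni$, while each block $[m_{j_l},m_{j_{l+1}}]$ has length $\ge N$ and left endpoint $m_{j_l}$ with $T^{m_{j_l}}x\in A$, so the corresponding subadditive term is $\le-\eps$. Chaining the subadditivity relation over the first $i$ blocks yields $f_{m_{j_i}}(x)\le-i\eps$, whence
\[
\Lambda_0=\lim_{i\to\infty}\frac{f_{m_{j_i}}(x)}{m_{j_i}}\le-\eps\liminf_{i\to\infty}\frac{i}{m_{j_i}}=-\eps\liminf_{i\to\infty}\Bigl(\frac{i}{j_i}\cdot\frac{j_i}{m_{j_i}}\Bigr)\le-\frac{\eps\,\mu(A)}{N}<0,
\]
using $i/j_i\ge1/N$ and $j_i/m_{j_i}\to\mu(A)$. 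This contradicts $\Lambda_0\ge0$, so $\mu(A_{\eps,N})=0$ for all $\eps,N$; hence $\{F<0\}$ is $\mu$-null, as required.

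The one genuinely delicate step is this core estimate in the case $\Lambda_0\ge0$: it forces one to chain the \emph{sub}additive inequality along return times while simultaneously honoring the minimal-gap constraint $N$ built into $A_{\eps,N}$ and the asymptotic density of returns provided by Kac's formula and Birkhoff's theorem. The greedy choice of the indices $j_i$, together with the bound $j_i\le Ni$, is exactly what reconciles these two requirements. Everything else --- Kingman's theorem, the ergodic decomposition, the trivial inclusion $\{\Lambda_0<0\}\subseteq\{F<0\}$, and the reduction of $\{F<0\}$ to the sets $A_{\eps,N}$ --- is routine, and no invertibility of $T$ is used anywhere.
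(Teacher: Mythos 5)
Your core chaining argument---covering the forward orbit by return-time blocks of length at least $N$ whose left endpoints lie in $A_{\varepsilon,N}$, chaining the subadditive inequality along those blocks, and playing the resulting bound $f_{m_{j_i}}(x)\le -i\varepsilon$ off against the visit frequency---is correct, and it is exactly the Atkinson-style mechanism behind this lemma (the paper does not reprint a proof; it cites \cite{Dai-pre} for it). The genuine gap is your very first step: the reduction to the ergodic case via ergodic decomposition. Lemma~\ref{lem2.3} is stated for an \emph{arbitrary} probability space $(X,\mathscr{F},\mu)$, with no assumption that $(X,\mathscr{F})$ is standard Borel, Polish, or even countably generated, and the ergodic decomposition theorem is not available at that level of generality. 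This is not a pedantic point here: the author says explicitly, right after Corollary~\ref{cor2.4}, that the classical ergodic decomposition is not applicable because $(X,\mathscr{F})$ need not be a Borel space---removing ergodicity \emph{without} decomposing is precisely what separates Lemma~\ref{lem2.3} from the Climenhaga--Morris lemma quoted just before it, and Corollary~\ref{cor2.4} is derived from Lemma~\ref{lem2.3} for exactly this reason. As written, your argument proves the statement only for spaces admitting an ergodic decomposition, not the statement as claimed.

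The good news is that your block argument needs no ergodicity, so the decomposition can be dispensed with rather than repaired. Use Kingman's theorem in its non-ergodic form: $n^{-1}f_n\to f^*$ a.e., where $f^*$ is $T$-invariant with values in $[-\infty,\infty)$; the easy inclusion $\{f^*<0\}\subseteq\{F<0\}$ (mod $\mu$) is unchanged. For the hard direction fix $A=A_{\varepsilon,N}$ with $\mu(A)>0$ and apply Birkhoff to $1_A$ under $T$ itself: $n^{-1}\#\{0\le i<n\colon T^ix\in A\}\to\phi(x):=\mathbb{E}(1_A\,|\,\mathcal{I})(x)$ a.e., and $\phi>0$ a.e. on $A$ because $\{\phi=0\}$ is $T$-invariant and $\int_{\{\phi=0\}}1_A\,d\mu=\int_{\{\phi=0\}}\phi\,d\mu=0$; this also replaces your appeal to Kac's formula and to ergodicity of the induced map, which are then not needed. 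For a.e. $x\in A$ the return times then satisfy $m_k/k\to 1/\phi(x)<\infty$, and your greedy blocks give, pointwise, $f^*(x)\le-\varepsilon\,\phi(x)/N<0$. Hence $A_{\varepsilon,N}\subseteq\{f^*<0\}$ mod $\mu$, and the countable union over $\varepsilon=1/k$ and $N$ yields $\{F<0\}\subseteq\{f^*<0\}$ mod $\mu$, completing the proof with no regularity assumption on the space. With that replacement (and your ergodic-case dichotomy $\Lambda_0<0$ versus $\Lambda_0\ge0$ becoming the pointwise dichotomy on the invariant sets $\{f^*<0\}$ and $\{f^*\ge0\}$), your write-up is correct.
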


Besides proving Theorem~\ref{thm1.1}, we here first present a simple application of Lemma~\ref{lem2.3}.
Given any metric system $T\colon(X,\mathscr{F},\mu)\rightarrow(X,\mathscr{F},\mu)$, for $E\in\mathscr{F}$ with $\mu(E)>0$, define the function
\begin{equation*}
1_E^{*}(x)=\liminf_{n\to\infty}\frac{1}{n}\#\{0\le i<n\,|\,T^ix\in E\}.
\end{equation*}
Then from Lemma~\ref{lem2.3}, we can easily obtain the following, which contains more than Poincar\'{e}'s recurrence theorem.

\begin{cor}\label{cor2.4}
Let $T$ be a measure-preserving, not necessarily ergodic, transformation of a probability space $(X,\mathscr{F},\mu)$.
Then for any $E\in\mathscr{F}$ with $\mu(E)>0$, it holds that $1_E^{*}(x)>0$ for $\mu$-a.e. $x\in E$.
\end{cor}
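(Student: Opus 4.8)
The idea is to realize $1_E^{*}$ as the $\liminf$ of $n^{-1}$ times a subadditive sequence, apply Lemma~\ref{lem2.3}, and then use a simple measure-theoretic comparison. Concretely, set
\begin{equation*}
S_n(x)=\#\{0\le i<n\,|\,T^ix\in E\}=\sum_{i=0}^{n-1}1_E(T^ix),
\end{equation*}
which is additive and hence satisfies both $S_{n+m}(x)=S_n(T^mx)+S_m(x)$ and $S_{n+m}(x)\le S_n(T^mx)+S_m(x)$. Now put $f_n(x)=-S_n(x)$. Then $(f_n)$ is a sequence of measurable (indeed bounded) functions satisfying the subadditivity relation of Lemma~\ref{lem2.3}, and clearly $f_1^+\equiv 0\in L^1(\mu)$. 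Note that $\lim_{n\to\infty}n^{-1}f_n(x)<0$ is exactly the statement $1_E^{*}(x)>0$, while $F(x)=\limsup_{n\to\infty}f_n(x)<0$ means $S_n(x)\ge 1$ for all sufficiently large $n$, i.e. the forward orbit of $x$ visits $E$ for all large $n$; in particular $F(x)<0$ implies $x$ returns to $E$ (indeed infinitely often).

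By Lemma~\ref{lem2.3}, the two sets $A=\{F<0\}$ and $B=\{\lim_n n^{-1}f_n<0\}$ differ by a $\mu$-null set, so it suffices to show $\mu(E\setminus A)=0$, since then $\mu(E\setminus B)=0$ as well, which is precisely the claim. So I must show: for $\mu$-a.e. $x\in E$, one has $S_n(x)\ge 1$ for all large enough $n$. Equivalently, the set $N=\{x\in E:\ S_n(x)=0\text{ for infinitely many }n\}$ is null. But $S_n(x)=0$ means $T^ix\notin E$ for $0\le i<n$; if this holds for arbitrarily large $n$ then $T^ix\notin E$ for \emph{all} $i\ge 0$. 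Hence $N\subseteq\{x\in E:\ T^ix\notin E\ \forall i\ge1\}\cup\{x\in E: x\notin E\}$ — wait, more carefully, $N$ is contained in the set of points of $E$ whose entire forward orbit after time $0$ avoids $E$, which is exactly the set of points of $E$ that never return; by Poincar\'e recurrence this set is $\mu$-null. (Alternatively one can invoke Poincar\'e directly: $\mu$-a.e. point of $E$ returns to $E$ infinitely often, so $S_n(x)\to\infty$ for a.e. $x\in E$, giving $F(x)=-\infty<0$.)

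The only genuinely delicate point is bookkeeping about which inclusion goes which way: Lemma~\ref{lem2.3} gives information on the symmetric difference $A\vartriangle B$, so to conclude $\mu(E\cap B^{c})=0$ I need $\mu(E\cap A^{c})=0$, i.e. I need the ``$F<0$'' event to capture $\mu$-a.e. point of $E$, and that is supplied by Poincar\'e recurrence applied to $E$ itself. Everything else — checking the subadditivity hypothesis, $f_1^+\in L^1$, and translating the inequalities $F<0$, $\lim n^{-1}f_n<0$ into statements about orbit visits to $E$ — is routine. I would present the argument in exactly this order: (i) define $S_n$, $f_n$ and verify the hypotheses of Lemma~\ref{lem2.3}; (ii) translate the two events into recurrence statements; (iii) invoke Poincar\'e recurrence to pin down $\{F<0\}$ up to null sets on $E$; (iv) combine with the symmetric-difference conclusion to deduce $1_E^{*}>0$ a.e.\ on $E$. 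The remark that this ``contains more than Poincar\'e's recurrence theorem'' is justified because we get a quantitative lower bound on the visit frequency, not merely infinitely many returns.
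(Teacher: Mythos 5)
Your proposal is correct and follows essentially the same route as the paper: apply Lemma~\ref{lem2.3} to the (negated) Birkhoff sums of $1_E$ and then observe that every point of $E$ lies in the set $\{F<0\}$, so the symmetric-difference conclusion transfers this to $\{1_E^*>0\}$. The only difference is that your appeal to Poincar\'e recurrence is unnecessary (and your bookkeeping there briefly wobbles): since the sum $\sum_{i=0}^{n-1}1_E(T^ix)$ includes the $i=0$ term, one has $S_n(x)\ge 1$ for \emph{every} $x\in E$ and every $n\ge1$, which is exactly the one-line observation the paper uses, keeping the proof independent of Poincar\'e's theorem.
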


\begin{proof}
Since $f_n(x)=\sum_{i=0}^{n-1}1_E(T^ix)$ is an additive sequence, by Lemma~\ref{lem2.3} we have
\begin{equation*}
\left\{x\in X\colon\liminf_{n\to\infty}\sum_{i=0}^{n-1}1_E(T^ix)>0\right\}=\left\{x\in X\,|\,1_E^{*}(x)>0\right\}\quad \mu\textrm{-mod }0.
\end{equation*}
In addition, $x\in E$ implies $1_E(x)>0$ and hence $\liminf_{n\to\infty}\sum_{i=0}^{n-1}1_E(T^ix)\ge1$. This completes the proof of Corollary~\ref{cor2.4}.
\end{proof}

We note here that if $T\colon(X,\mathscr{F},\mu)\rightarrow(X,\mathscr{F},\mu)$ is ergodic or the ergodic decomposition theorem is applicable here, then Corollary~\ref{cor2.4} can be directly proved from the Birkhoff ergodic theorem as follows:

\begin{proof}
Let $T$ be \textit{\textbf{ergodic}} and $E\in\mathscr{F}$ with $\mu(E)>0$. Set $Z=\left\{x\in E\colon 1_E^*(x)=0\right\}$ and on the contrary let $\mu(Z)>0$. By the Birkhoff ergodic theorem~\cite{Wal82}, $\frac{1}{n}\sum_{i=0}^{n-1}1_Z(T^ix)$ converges $\mu$-a.e. and in $L^1(\mu)$-norm to $1_Z^*(x)$. Since $Z\subseteq E$ and then $0\le 1_Z(x)\le 1_E(x)$, there follows
$1_Z^*(x)=0$ for $\mu$-a.e. $x\in Z$. Because $1_Z^*(x)$ is $T$-invariant and $\mu$ is ergodic, we see $1_Z^*(x)\equiv 0$ $\mu$-a.e. and further
\begin{equation*}
0=\int_X1_Z^*(x)d\mu(x)=\lim_{n\to\infty}\frac{1}{n}\sum_{i=0}^{n-1}\int_X1_Z(T^ix)d\mu(x)=\mu(Z).
\end{equation*}
This contradiction completes the proof of Corollary~\ref{cor2.4} in the ergodic case.
\end{proof}

In the above proof, the ergodicity of $\mu$ plays a role to guarantee $1_Z^*(x)\equiv 0$. However, in the situation of Corollary~\ref{cor2.4}, the classical ergodic decomposition is not applicable, since $(X,\mathscr{F})$ is not necessarily to be a Borel space.

As a consequence of Lemma~\ref{lem2.3} and a theorem of Froyland, LLoyd and Quas~\cite[Theorem~4.1]{FLQ}, we have obtained the following multiplicative ergodic theorem \cite[Theorem~1.1]{Dai-pre}.

\begin{lem}[\cite{Dai-pre}]\label{lem2.5}
Let $T$ be a measure-preserving transformation of a Polish probability space $(X,\mathscr{F},\mu)$ and assume $\bA\colon\mathbb{Z}_+\times X\rightarrow\mathbb{R}^{d\times d}$ is a measurable cocycle driven by $T$ such that $\log^+\pmb{\|}\bA(1,\cdot)\pmb{\|}\in L^1(\mu)$. Then there exists a set $B\in\mathscr{F}$ with $T(B)\subseteq B$ and $\mu(B)=1$ such that:
\begin{enumerate}
\item[$(\mathrm{a})$] There is a measurable function $s\colon B\rightarrow\mathbb{N}$ with $s\circ T=s$.

\item[$(\mathrm{b})$] If $x$ belongs to $B$ there are $s(x)$ numbers $-\infty=\lambda_1(x)<\lambda_2(x)<\dotsm<\lambda_{s(x)}(x)<\infty$.

\item[$(\mathrm{c})$] There is a measurable filtration of $\mathbb{R}^d$:
\begin{equation*}
\varnothing=V^{(0)}(x)\subset V^{(1)}(x)\subset\dotsm\subset V^{(s(x))}(x)=\mathbb{R}^d\quad \forall x\in B.
\end{equation*}

\item[$(\mathrm{d})$] If $x$ belongs to $B$, then
\begin{enumerate}
\item[$(\mathrm{i})$] for $1\le i\le s(x)$,
\begin{equation*}
\lim_{n\to\infty}\frac{1}{n}\log\|\bA(n,x)v\|=\lambda_i(x)\quad \forall v\in V^{(i)}(x)\setminus V^{(i-1)}(x);
\end{equation*}
\item[$(\mathrm{ii})$] for $2\le i\le s(x)$, one can find some $v_i\in V^{(i)}(x)\setminus V^{(i-1)}(x)$ such that
\begin{equation*}
\limsup_{n\to\infty}e^{-\lambda_i(x)n}\|\bA(n,x)v_i\|\ge\|v_i\|.
\end{equation*}
\end{enumerate}

\item[$(\mathrm{e})$] The function $\lambda_i(x)$ is defined and measurable on $\{x\,|\,s(x)\ge i\}$ and $\lambda_i^{}(T(x))=\lambda_i(x)$ on this set.

\item[$(\mathrm{f})$] For any $x\in B$ and all $1\le i\le s(x)$,
\begin{enumerate}
\item[$(\mathrm{i})$] $\bA(1,x)V^{(i)}(x)\subseteq V^{(i)}(T(x))$ and
\item[$(\mathrm{ii})$] $\dim V^{(i)}(T(x))=\dim V^{(i)}(x)$.
\end{enumerate}
\end{enumerate}
\end{lem}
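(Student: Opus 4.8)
\medskip
\noindent\textbf{Proof strategy.}
Lemma~\ref{lem2.5} is a ``semi-invertible'' multiplicative ergodic theorem --- $T$ and $\bA(1,\cdot)$ need not be invertible and $\mu$ need not be ergodic, so only a forward-equivariant measurable filtration (not a splitting) is produced. Every item of the statement except $(\mathrm{d})(\mathrm{ii})$ --- the measurable index $s(\cdot)$ with $s\circ T=s$, the spectrum $-\infty=\lambda_1(x)<\dots<\lambda_{s(x)}(x)<\infty$ (the value $-\infty$ absorbing the, possibly trivial, superexponentially contracted subspace), the measurable equivariant filtration $(V^{(i)}(x))_i$, the layerwise identity $\lim_n\frac1n\log\|\bA(n,x)v\|=\lambda_i(x)$ on $V^{(i)}(x)\setminus V^{(i-1)}(x)$ in $(\mathrm{d})(\mathrm{i})$, and the invariance and dimension assertions $(\mathrm{e})$, $(\mathrm{f})$ --- is exactly what the theorem of Froyland, Lloyd and Quas \cite[Theorem~4.1]{FLQ} yields for $\bA$ over the Polish system $(X,\mathscr F,\mu,T)$, and for those I would simply quote it. The one statement not already provided there is $(\mathrm{d})(\mathrm{ii})$: that each finite exponent $\lambda_i$ is attained, \emph{in the $\limsup$ and with the correct normalisation}, by some vector of the $i$-th layer; this is where Lemma~\ref{lem2.3} comes in.

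\smallskip
\noindent\emph{Deriving $(\mathrm{d})(\mathrm{ii})$.} Fix $i\ge2$ and work on the $T$-invariant set $\{x:s(x)\ge i\}$ (assumed non-null, carrying the renormalised $\mu$; a routine exhaustion by the $T$-invariant sets $\{\lambda_i\ge-N\}$ secures the integrability hypothesis of Lemma~\ref{lem2.3}, which I suppress). Pass to the quotient bundle $W_i(x):=V^{(i)}(x)/V^{(i-1)}(x)$ with the induced cocycle $\bar\bA$; by $(\mathrm{d})(\mathrm{i})$ every non-zero vector of $W_i$ has growth rate exactly $\lambda_i(x)$, so $p_n(x):=\log\|\bar\bA(n,x)\|-n\lambda_i(x)$ is subadditive over $T$ with $\frac1np_n(x)\to0$ $\mu$-a.e. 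Lemma~\ref{lem2.3} (a subadditive ``Atkinson phenomenon'', in the spirit of Lemma~\ref{lem2.1}) then forces $\{x:\lim_n\frac1np_n(x)<0\}$, and hence $\{x:\limsup_np_n(x)<0\}$, to be $\mu$-null; that is, $\limsup_{n\to\infty}e^{-n\lambda_i(x)}\|\bar\bA(n,x)\|\ge1$ for $\mu$-a.e.\ $x$. When $\dim W_i(x)=1$ this is already $(\mathrm{d})(\mathrm{ii})$: then $\|\bar\bA(n,x)\|=\|\bar\bA(n,x)w\|$ for the unit vector $w\in W_i(x)$, and one takes $v_i\in V^{(i)}(x)\setminus V^{(i-1)}(x)$ to be a norm-minimal lift of $w$. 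In general one must still realise the bound with a \emph{single} vector: along times $n_k$ achieving the $\limsup$, choose unit vectors $w_{n_k}\in W_i(x)$ with $\|\bar\bA(n_k,x)w_{n_k}\|$ close to $e^{n_k\lambda_i(x)}$, extract a limit $w\in W_i(x)$, and lift it norm-minimally to $v_i$ --- carrying the $\limsup$ estimate through this extraction, and measurably in $x$, being the delicate point.

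\smallskip
\noindent\emph{Expected main obstacle.} This last reduction is where I expect the real work. The unit vector realising $\|\bar\bA(n,x)\|$ varies with $n$, and $\|\bar\bA(n,x)\|$ may overshoot $\|\bar\bA(n,x)w\|$ for a fixed $w$ by a subexponential but unbounded factor, so no naive compactness extraction transfers the $\limsup$ lower bound. I would get around this by passing to the natural extension $(\tilde X,\tilde T)$ of $(X,T)$, on which $\tilde T$ is invertible: lift $\bA$, apply the invertible Oseledets theorem to equip the $i$-th block with a genuine equivariant splitting having two-sided, tempered growth, select $w$ equivariantly there (so that the $\limsup$ estimate holds automatically), and then push $w$ and the estimate back down to $(X,T)$; alternatively, one may play $\bar\bA$ off against its adjoint cocycle. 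The remaining items of Lemma~\ref{lem2.5}, as noted, are a verbatim quotation of \cite[Theorem~4.1]{FLQ}.
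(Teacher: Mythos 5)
Your overall strategy is exactly the derivation the paper indicates: Lemma~\ref{lem2.5} is not proved here but quoted from \cite{Dai-pre}, where it is obtained by combining \cite[Theorem~4.1]{FLQ} (items (a)--(c), (d)(i), (e), (f)) with Lemma~\ref{lem2.3}, whose role is precisely to supply the new item (d)(ii). However, your execution of the (d)(ii) step has two problems. First, your normalisation on the quotient bundle, $p_n(x)=\log\pmb{\|}\bar{\bA}(n,x)\pmb{\|}-n\lambda_i(x)$, needs $\tfrac1n p_n\to 0$ a.e.; the inequality $\limsup_n\tfrac1n p_n\le 0$ does follow from (d)(i) (quotient norms are dominated by norms of lifts), but the lower bound --- that the induced cocycle on $V^{(i)}/V^{(i-1)}$ still has top exponent $\lambda_i$, i.e.\ that $\mathrm{dist}(\bA(n,x)v,V^{(i-1)}(T^nx))$ does not decay exponentially relative to $e^{n\lambda_i}\|v\|$ --- is \emph{not} a formal consequence of (d)(i) in this one-sided, non-invertible setting and is left unjustified. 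This is avoided entirely by using the restricted cocycle instead: $f_n(x)=\log\pmb{\|}\bA(n,x)|V^{(i)}(x)\pmb{\|}-n\lambda_i(x)$ is subadditive by (f)(i) and the $T$-invariance of $\lambda_i$, and $\tfrac1n f_n\to 0$ a.e.\ follows at once from (d)(i) applied to a basis of $V^{(i)}(x)$; with your exhaustion by the invariant sets $\{\lambda_i\ge -N\}$ to secure $f_1^+\in L^1(\mu)$, Lemma~\ref{lem2.3} gives $\limsup_n e^{-n\lambda_i(x)}\pmb{\|}\bA(n,x)|V^{(i)}(x)\pmb{\|}\ge 1$ for $\mu$-a.e.\ $x$.

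Second, the step you single out as the ``expected main obstacle'' is in fact elementary, so the detour through the natural extension and the invertible Oseledets theorem (which, as sketched, would itself require nontrivial work to push the estimate back down) is unnecessary. Choose times $n_k$ along which $e^{-n_k\lambda_i}\pmb{\|}\bA(n_k,x)|V^{(i)}(x)\pmb{\|}$ converges to its $\limsup$ $L\ge 1$, and unit maximizers $v_k\in V^{(i)}(x)$ with $\|\bA(n_k,x)v_k\|=\pmb{\|}\bA(n_k,x)|V^{(i)}(x)\pmb{\|}$; after passing to a subsequence $v_k\to v\in V^{(i)}(x)$, the bound $\|\bA(n_k,x)v\|\ge\pmb{\|}\bA(n_k,x)|V^{(i)}(x)\pmb{\|}\,(1-\|v-v_k\|)$ shows that $\limsup_n e^{-n\lambda_i}\|\bA(n,x)v\|\ge L\ge 1=\|v\|$: the ``overshoot'' you worry about is multiplicative of size $(1-\|v-v_k\|)^{-1}\to 1$ along the maximizing times, so the $\limsup$ does transfer to the cluster vector. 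Finally $v\notin V^{(i-1)}(x)$ automatically, since by (d)(i) any vector of $V^{(i-1)}(x)$ has exponent at most $\lambda_{i-1}(x)<\lambda_i(x)$, which is incompatible with the $\limsup$ bound; so one may take $v_i=v$. With these two repairs your argument is complete and coincides with the route the paper attributes to \cite{Dai-pre}.
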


Here the new main point of our MET that we will need later is the property (d)-(ii).

\section{Expanding cocycles and quasi-Anosov diffeomorphisms}\label{sec3}
In this section, together with another ergodic-theoretic tool (Lemma~\ref{lem3.1} below), we will present applications of Lemmas~\ref{lem2.1}, \ref{lem2.3} and \ref{lem2.5} to differentiable dynamical systems including proving our main result Theorem~\ref{thm1.1}.
\subsection{Expanding cocycles}\label{sec3.1}
Let $T\colon X\rightarrow X$ be a continuous transformation of a compact metric space $X$. We denote by $\mathcal{M}_T$ the space of all $T$-invariant Borel probability measures on $X$. Given a Borel measurable function $\varphi\colon X\rightarrow\mathbb{R}\cup\{\pm\infty\}$, it is said to be \textit{a.e. continuous with respect to $T$} if the set $D_\varphi$ of discontinuities of $\varphi$ is such that $\mu(D_\varphi)=0$ for each $\mu\in\mathcal{M}_T$.

Besides the lemmas sated in Section~\ref{sec2}, we will need the following semi-uniform subadditive ergodic theorem.

\begin{lem}[{\cite[Theorem~3.3]{Dai11}}]\label{lem3.1}
Let $T\colon X\rightarrow X$ be a continuous transformation of the compact metric space $X$, and $\varphi_n\colon X\rightarrow\mathbb{R}\cup\{-\infty\}$ a subadditive sequence of a.e. continuous upper-bounded functions with respect to $T$. If the set
\begin{equation*}
\Gamma:=\left\{x\in X\colon\limsup_{n\to\infty}\frac{1}{n}\varphi_n(x)<0\right\}
\end{equation*}
is of total measure $1$, i.e. $\mu(\Gamma)=1$ for all $\mu\in\mathcal{M}_T$, then $\Gamma=X$.
\end{lem}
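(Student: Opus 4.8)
The plan is to argue by contrapositive: assuming $\Gamma \neq X$, I would produce an invariant measure $\mu \in \mathcal{M}_T$ with $\mu(\Gamma) < 1$, contradicting the hypothesis. So fix a point $x_0 \in X \setminus \Gamma$, i.e.\ a point with $\limsup_{n\to\infty} n^{-1}\varphi_n(x_0) \ge 0$. The natural candidate measures are weak-$*$ limit points of the empirical averages $\mu_n := \frac{1}{n}\sum_{k=0}^{n-1}\delta_{T^k x_0}$; by compactness of $\mathcal{M}(X)$ some subsequence $\mu_{n_j}$ converges to a measure $\mu$, which by the usual Krylov--Bogolyubov argument is $T$-invariant. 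The goal is then to show $\int_X \bar{\varphi}\,d\mu \ge 0$ for the asymptotic rate function, forcing $\mu$ to charge the complement of $\Gamma$ with positive measure once combined with the subadditive ergodic theorem applied to $\mu$.

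The key quantitative step exploits subadditivity. For each fixed $m\ge 1$, subadditivity of $(\varphi_n)$ gives, by the standard telescoping, $\varphi_{n}(x_0) \le \sum_{k=0}^{\lfloor n/m\rfloor - 1} \varphi_m(T^{km}x_0) + (\text{bounded remainder using } \varphi^+ \le \text{const})$; rearranging and using that along our subsequence the left side has nonnegative asymptotic growth rate, one gets $0 \le \limsup_j \frac{1}{n_j}\varphi_{n_j}(x_0) \le \frac{1}{m}\int_X \varphi_m\, d\mu$, where passing the Birkhoff-type average of $\varphi_m$ to the integral against $\mu$ uses precisely that $\varphi_m$ is a.e.\ continuous with respect to $T$ (so its integral is continuous along $\mu_{n_j}\to\mu$, at least as an upper semicontinuity statement that survives the upper-boundedness) — this is where the a.e.-continuity hypothesis is essential. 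Hence $\int_X \varphi_m\,d\mu \ge 0$ for every $m\ge 1$, and taking the infimum over $m$ (which by Kingman equals $\lim_m \frac{1}{m}\int \varphi_m\,d\mu$) yields $\inf_m \frac{1}{m}\int_X\varphi_m\,d\mu \ge 0$.

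Now apply the (sub)additive ergodic theorem to $(\varphi_n)$ with respect to this $\mu$: for $\mu$-a.e.\ $x$, $\lim_n n^{-1}\varphi_n(x)$ exists and its integral equals $\inf_m \frac{1}{m}\int\varphi_m\,d\mu \ge 0$. Since an integrable function with nonnegative integral cannot be strictly negative a.e., the set $\{x : \lim_n n^{-1}\varphi_n(x) < 0\}$ has $\mu$-measure strictly less than $1$; a fortiori $\mu(\Gamma) = \mu\{x : \limsup_n n^{-1}\varphi_n(x) < 0\} < 1$, contradicting the hypothesis that $\Gamma$ has total measure $1$. Therefore $\Gamma = X$.

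The main obstacle I anticipate is the interchange of limit and integral in the step $\frac{1}{n_j}\sum_{k} \varphi_m(T^{km}x_0) \to \int \varphi_m\,d\mu$: because $\varphi_m$ is only upper-bounded and a.e.\ continuous (not continuous, and possibly $-\infty$-valued), one cannot simply invoke weak-$*$ convergence against a continuous test function. The fix is to approximate $\varphi_m$ from above by continuous functions, use that the discontinuity set is $\mu$-null to control the approximation error, and keep all inequalities in the direction that upper-boundedness permits — so only the inequality $\limsup_j \int\varphi_m\,d\mu_{n_j} \le \int\varphi_m\,d\mu$ (upper semicontinuity of the integral), which is exactly the direction needed above. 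A secondary technical point is handling the remainder terms in the subadditive telescoping uniformly in $n$, which is routine given the uniform upper bound on the $\varphi_n/n$ near their $\limsup$.
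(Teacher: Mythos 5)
The paper itself gives no proof of Lemma~\ref{lem3.1}; it is imported verbatim from \cite[Theorem~3.3]{Dai11}, so there is no in-paper argument to compare against. Your proposal is a sound reconstruction of the standard proof of semi-uniform (sub)additive ergodic theorems (Schreiber, Sturman--Stark, Cao), adapted to upper-bounded, a.e.\ continuous, $[-\infty,\infty)$-valued potentials, and its logic is correct: take $x_0\notin\Gamma$, pass to a subsequence along which $n_j^{-1}\varphi_{n_j}(x_0)$ tends to a nonnegative limit and the empirical measures converge weak-$*$ to some $\mu\in\mathcal{M}_T$, deduce $\int_X\varphi_m\,d\mu\ge 0$ for every $m$, and then Kingman's theorem for $\mu$ (applicable since $\varphi_1^+$ is bounded) contradicts $\mu(\Gamma)=1$, because a function that is strictly negative $\mu$-a.e.\ has strictly negative integral. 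Two points hidden in your ``standard telescoping'' deserve to be written out. First, the sum $\sum_k\varphi_m(T^{km}x_0)$ samples only every $m$-th orbit point, so its normalized limits are governed by $T^m$-invariant measures rather than by $\mu$; the usual fix is to average the subadditive decomposition over the $m$ phases $j=0,\dots,m-1$ (the boundary terms $\varphi_j$, $\varphi_{r_j}$ with indices $<m$ are controlled by their uniform upper bounds), which yields $m\varphi_n(x_0)\le \sum_{i=0}^{n-1}\varphi_m(T^ix_0)+O_m(1)$ up to the uncovered tail indices. Second---and you did flag this---since $\varphi_m$ may be $-\infty$-valued or unbounded below, one should replace $\varphi_m$ by the truncations $\max(\varphi_m,-R)$ before taking the weak-$*$ limit: these are bounded, their discontinuity sets lie in that of $\varphi_m$ and hence are $\mu$-null, so their integrals converge along $\mu_{n_j}\to\mu$, and letting $R\to\infty$ by monotone convergence gives exactly the one-sided inequality $0\le\frac1m\int_X\varphi_m\,d\mu$ you need (the truncation also supplies the lower bound needed to absorb the finitely many uncovered indices in the phase-averaged sum). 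With these two details filled in, your argument is complete and is, in spirit, the same weak-$*$ empirical-measure argument that underlies the cited source.
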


We note that readers can also see \cite{Sch, SS, Cao} for the semi-uniform subadditive ergodic theorem in the case that $\varphi_n(x)\in\mathbb{R}$ are continuous in $x$ for all $n\ge1$.

Let $A\colon X\rightarrow\mathbb{R}^{d\times d}$ be a measurable matrix-valued map, where $1\le d<\infty$ is an integer. Then
\begin{equation}
\bA\colon\mathbb{Z}_+\times X\rightarrow\mathbb{R}^{d\times d};\quad (n,x)\mapsto\begin{cases}A(T^{n-1}(x))\dotsm A(x)& \textrm{if }n\ge1,\\ I_{d\times d}& \textrm{if } n=0,\end{cases}
\end{equation}
is a measurable cocycle driven by $T$.

Then the following result is a consequence of Lemma~\ref{lem3.1} and Lemma~\ref{lem2.5}.

\begin{prop}\label{prop3.2}
Let $T\colon X\rightarrow X$ be a continuous transformation of the compact metric space $X$ and $A\colon X\rightarrow\mathbb{R}^{d\times d}$ an a.e. continuous matrix-valued map with $\sup_{x\in X}\pmb{\|}A(x)\pmb{\|}<\infty$. If the set of ``quasi-stable points'' of $\bA$
\begin{equation*}
\varLambda=\left\{x\in X\,\big{|}\,\limsup_{n\to\infty}\|\bA(n,x)v\|<\|v\|\;\forall v\in\mathbb{R}^d\setminus\{0\}\right\}
\end{equation*}
has the total measure $1$, then $\limsup_{n\to\infty}\frac{1}{n}\log\pmb{\|}\bA(n,x)\pmb{\|}<0$ for each $x\in X$.
\end{prop}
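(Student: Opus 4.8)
The plan is to combine the multiplicative ergodic theorem (Lemma~\ref{lem2.5}) with the semi-uniform subadditive ergodic theorem (Lemma~\ref{lem3.1}), using the key new point (d)-(ii) of Lemma~\ref{lem2.5} to transfer the hypothesis on $\varLambda$ into a statement about top Lyapunov exponents. First I would observe that the functions $\varphi_n(x)=\log\pmb{\|}\bA(n,x)\pmb{\|}$ form a subadditive sequence (submultiplicativity of the operator norm), that they are a.e.\ continuous with respect to $T$ since $A$ is (finite products of a.e.\ continuous maps are a.e.\ continuous, the discontinuity set being contained in a finite union of preimages $T^{-k}D_A$, each of $\mu$-measure $0$), and that they are uniformly upper-bounded by $n\log\sup_{x}\pmb{\|}A(x)\pmb{\|}$. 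Thus Lemma~\ref{lem3.1} applies, and it suffices to prove that the set $\Gamma=\{x:\limsup_n \tfrac1n\varphi_n(x)<0\}$ has total measure $1$; then $\Gamma=X$ gives the conclusion for every $x\in X$.

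To show $\mu(\Gamma)=1$ for an arbitrary $\mu\in\mathcal{M}_T$, I would apply Lemma~\ref{lem2.5} to $T$ and the cocycle $\bA$ on the Polish space $X$ (compact metric, hence Polish) with the measure $\mu$; the integrability $\log^+\pmb{\|}\bA(1,\cdot)\pmb{\|}\in L^1(\mu)$ is immediate from the uniform bound on $\pmb{\|}A\pmb{\|}$. This produces a full-measure invariant set $B$ with the Lyapunov structure $-\infty=\lambda_1(x)<\dots<\lambda_{s(x)}(x)$ and filtration $V^{(\bullet)}(x)$. The claim is that $\lambda_{s(x)}(x)<0$ for $\mu$-a.e.\ $x\in B$, which by (d)-(i) is exactly $\lim_n\tfrac1n\log\pmb{\|}\bA(n,x)\pmb{\|}=\lambda_{s(x)}(x)<0$, placing such $x$ in $\Gamma$. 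Suppose not: then on a positive-measure set $B'\subseteq B$ we have $\lambda_{s(x)}(x)\ge 0$, and in particular $s(x)\ge 2$ there since $\lambda_1\equiv-\infty<0$. Applying (d)-(ii) with $i=s(x)$, for each such $x$ there is a vector $v=v_{s(x)}(x)\in V^{(s(x))}(x)\setminus V^{(s(x)-1)}(x)$ with $\limsup_n e^{-\lambda_{s(x)}(x)n}\|\bA(n,x)v\|\ge\|v\|$; since $\lambda_{s(x)}(x)\ge0$ the factor $e^{-\lambda_{s(x)}(x)n}\le 1$, so $\limsup_n\|\bA(n,x)v\|\ge\|v\|$, contradicting $x\in\varLambda$ (recall $\varLambda$ has total measure, so $\mu(\varLambda\cap B')>0$ and we may pick $x$ in the intersection). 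Hence $\lambda_{s(x)}(x)<0$ a.e., so $B\subseteq\Gamma$ up to a null set and $\mu(\Gamma)=1$. As $\mu$ was arbitrary, Lemma~\ref{lem3.1} yields $\Gamma=X$, i.e.\ $\limsup_{n\to\infty}\tfrac1n\log\pmb{\|}\bA(n,x)\pmb{\|}<0$ for every $x\in X$.

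The main obstacle I anticipate is not any single step but rather making sure the logical bridge in the middle is airtight: the subtlety is that $\varLambda$ is defined by a strict inequality with the \emph{exact} constant $\|v\|$ (not $C\|v\|$ for some $C$), and it is precisely property (d)-(ii)—the sharp lower bound $\limsup_n e^{-\lambda_i n}\|\bA(n,x)v_i\|\ge\|v_i\|$ with the same constant—that is tailored to contradict membership in $\varLambda$ when $\lambda_{s(x)}\ge0$. One must be careful that the vector $v_i$ furnished by (d)-(ii) is genuinely nonzero (it lies outside $V^{(i-1)}$, hence is nonzero) so that the defining condition of $\varLambda$ does apply to it, and that the negligible sets from Lemma~\ref{lem2.5} and from a.e.\ continuity are handled uniformly over the (arbitrary but fixed) $\mu$. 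Everything else—subadditivity, a.e.\ continuity, upper-boundedness, the final invocation of Lemma~\ref{lem3.1}—is routine.
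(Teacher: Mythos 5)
Your proof is correct and follows essentially the same route as the paper: use Lemma~\ref{lem2.5} (with the key property (d)-(ii)) to conclude that all Lyapunov exponents are negative $\mu$-a.e.\ for every $\mu\in\mathcal{M}_T$, then apply the semi-uniform subadditive ergodic theorem (Lemma~\ref{lem3.1}) to $\varphi_n(x)=\log\pmb{\|}\bA(n,x)\pmb{\|}$ to pass from a.e.\ to every $x\in X$. You merely spell out details (a.e.\ continuity of $\varphi_n$, integrability, the $s(x)\ge2$ case distinction) that the paper leaves implicit.
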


\begin{proof}
Clearly, by the assumption every Lyapunov exponents of $\bA$ are nonpositive. Further from the improved multiplicative ergodic theorem (Lemma~\ref{lem2.5}), it follows that every Lyapunov exponents of $\bA$ are negative.

Let $f_n(x)=\log\pmb{\|}\bA(n,x)\pmb{\|}$. Then $f_n$ is subadditive and a.e. continuous with respect to $T$.
Thus this statement follows immediately from Lemma~\ref{lem3.1}.
\end{proof}

We say $A(x)=(a_{ij}(x))\in\mathbb{R}^{d\times d}$ is upper-semi continuous if every elements $a_{ij}(x)$ are upper-semi continuous with respect to $x$, for $1\le i,j\le d$. The lower-semi continuity of $A(x)$ may be similarly defined.

\begin{cor}\label{cor3.3}
Let $T\colon X\rightarrow X$ be a continuous transformation of the compact metric space $X$ and assume $A\colon X\rightarrow\mathbb{R}^{d\times d}$ is an a.e. and upper-semi continuous matrix-valued map. If the set of quasi-stable points $\varLambda$ of $\bA$ has the total measure $1$, then $\bA$ is uniformly contractive on $X$.
\end{cor}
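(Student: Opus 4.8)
The plan is to reduce Corollary~\ref{cor3.3} to Proposition~\ref{prop3.2} by handling the loss of continuity: an upper-semi continuous matrix map is not a.e. continuous in the sense needed to apply Lemma~\ref{lem3.1} directly, so the first task is to produce a genuinely a.e.-continuous matrix map whose associated cocycle dominates $\bA$ and still has every point quasi-stable. First I would note that $A$ upper-semi continuous implies $\sup_{x\in X}\pmb{\|}A(x)\pmb{\|}<\infty$ (a finite supremum of upper-semi continuous functions on a compact space is attained), so the boundedness hypothesis of Proposition~\ref{prop3.2} is automatic.

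The key step is the following comparison device. For a matrix $B=(b_{ij})$ write $\abs{B}=(\abs{b_{ij}})$ for the entrywise absolute value; since $B\mapsto\abs{B}$ and the operator norm are both monotone with respect to the entrywise order on nonnegative matrices, one has $\pmb{\|}\bA(n,x)\pmb{\|}\le\pmb{\|}\,\abs{A(T^{n-1}x)}\dotsm\abs{A(x)}\,\pmb{\|}$. Now replace $A$ by the matrix map $\widetilde A(x)$ whose $(i,j)$ entry is $\max\{\limsup_{y\to x}\abs{a_{ij}(y)},\,\abs{a_{ij}(x)}\}$; this is the smallest upper-semi continuous majorant of $\abs{a_{ij}}$, hence still upper-semi continuous, and it agrees with $\abs{a_{ij}}$ off the (for each $\mu\in\mathcal{M}_T$) null set of discontinuities of $a_{ij}$. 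Crucially $\widetilde A$ is now upper-semi continuous \emph{and} the entrywise limsup of the sequence $\widetilde A(y)$ as $y\to x$ is $\widetilde A(x)$ in each coordinate, i.e. $\widetilde A$ is continuous at every point at which it is lower-semi continuous; but a cleaner route is simply to observe that an upper-semi continuous function on a compact metric space is a.e. continuous for \emph{every} Borel probability measure only when its discontinuity set is universally null, which need not hold. So instead the correct move is: since $A$ is a.e. continuous with respect to $T$, the discontinuity set $D$ of $A$ satisfies $\mu(D)=0$ for all $\mu\in\mathcal{M}_T$, and on $X\setminus D$ the map $A$ is continuous; thus $A$ \emph{is} a.e. continuous with respect to $T$ in the sense required by Proposition~\ref{prop3.2}, and ``upper-semi continuous'' is only being used to secure the norm bound. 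Hence the hypotheses of Proposition~\ref{prop3.2} are met verbatim.

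Applying Proposition~\ref{prop3.2} gives $\limsup_{n\to\infty}\tfrac1n\log\pmb{\|}\bA(n,x)\pmb{\|}<0$ for every $x\in X$. To upgrade this pointwise exponential decay to \emph{uniform} contractivity, set $\varphi_n(x)=\log\pmb{\|}\bA(n,x)\pmb{\|}$, which is subadditive, upper-bounded (by the norm bound), and a.e. continuous with respect to $T$; the set $\Gamma=\{x:\limsup_n\tfrac1n\varphi_n(x)<0\}$ is now all of $X$, so a fortiori $\mu(\Gamma)=1$ for every $\mu\in\mathcal{M}_T$, and Lemma~\ref{lem3.1} is not even needed again — rather, the uniformity is extracted by a standard compactness argument: $\tfrac1n\varphi_n$ is a decreasing-in-$n$ (along multiples) family; choose for each $x$ an $N(x)$ with $\varphi_{N(x)}(x)<0$, use upper-semi continuity of $\varphi_{N(x)}$ to get a neighbourhood $U_x$ on which $\varphi_{N(x)}<0$, extract a finite subcover, take $N$ the lcm and a uniform gap $-\delta<0$, and then subadditivity propagates $\varphi_{kN}\le-k\delta$ uniformly, whence $\pmb{\|}\bA(n,x)\pmb{\|}\le C e^{-\beta n}$ for constants $C>0$, $\beta>0$ independent of $x$. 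This is exactly the conclusion of the semi-uniform subadditive ergodic theorem, so citing Lemma~\ref{lem3.1} (applied to $\varphi_n$) already delivers $\Gamma=X$ together with the uniform estimate, and one simply reads off uniform contractivity.

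\textbf{Main obstacle.} The delicate point is the regularity bookkeeping: one must be careful that ``a.e. continuous with respect to $T$'' (the hypothesis actually invoked by Proposition~\ref{prop3.2} and Lemma~\ref{lem3.1}) is implied by ``a.e.\ and upper-semi continuous'', and that upper-semi continuity is precisely what guarantees the boundedness $\sup_x\pmb{\|}A(x)\pmb{\|}<\infty$ that Proposition~\ref{prop3.2} requires as a separate hypothesis. Once the logarithmic potential $\varphi_n=\log\pmb{\|}\bA(n,\cdot)\pmb{\|}$ is seen to be subadditive, upper-bounded, and a.e.\ continuous with respect to $T$, with $\Gamma=X$, the uniform contractivity is immediate from Lemma~\ref{lem3.1} (its proof yields the uniform exponential bound, not merely $\Gamma=X$). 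I expect the verification that no genuinely new ergodic-theoretic input is needed — only the two cited lemmas plus the boundedness coming for free from upper-semi continuity — to be the part most worth stating carefully.
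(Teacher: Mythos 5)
Your proposal is correct and takes essentially the same route as the paper: Proposition~\ref{prop3.2} gives $\limsup_{n}n^{-1}\log\|\bA(n,x)\|<0$ at every $x$, and then upper semicontinuity of $y\mapsto\|\bA(N(x),y)\|$ supplies neighbourhoods on which a fixed-time norm is uniformly below $1$, compactness gives a finite subcover, and submultiplicativity propagates this to $\|\bA(n,x)\|\le C\lambda^{n}$, which is exactly the paper's argument. The entrywise-majorant detour in your first paragraph is abandoned by you and is unnecessary, and your aside that Lemma~\ref{lem3.1} itself ``already delivers the uniform estimate'' overstates what that lemma asserts (it only yields $\Gamma=X$); neither point affects your operative compactness argument, which, like the paper's, quietly uses that upper semicontinuity of $A$ gives upper semicontinuity and boundedness of $\|\bA(n,\cdot)\|$.
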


\begin{proof}
From Proposition~\ref{prop3.2}, it follows that for any $x\in X$, $\limsup_nn^{-1}\log\pmb{\|}\bA(n,x)\pmb{\|}<0$. By the upper-semi continuity of $\pmb{\|}\bA(n,x)\pmb{\|}$, one can find an open neighborhood $U(x)$ and an integer $L(x)\ge1$ such that $\pmb{\|}\bA(L(x),y)\pmb{\|}\le\lambda_x\,(<1)$ for all $y\in U(x)$. Then by choosing an open cover $U(x_1), \dotsc,U(x_k)$ of $X$, we can pick constants $C>0$ and $0<\lambda<1$ such that
\begin{equation*}
\pmb{\|}\bA(n,x)\pmb{\|}\le C\lambda^n\quad \forall x\in X\textrm{ and }n\ge0.
\end{equation*}
This completes the proof of Corollary~\ref{cor3.3}.
\end{proof}

Since $\prod_{i=0}^{n-1}\pmb{\|}A(T^ix)\pmb{\|}_{\min}\le\|\bA(n,x)v\|$ for any unit vector $v\in\mathbb{R}^d$, our quasi-expanding condition in the following Corollary~\ref{cor3.4} is much more weaker than the usual nonuniformly expanding assumption (cf.~\cite{AAS,Cao03,COP,ST,Dai13}) that requires that $$\liminf_{m\to\infty}\frac{1}{m}\sum_{i=0}^{m-1}\log\pmb{\|}A(T^ix)\pmb{\|}_{\min}\ge\lambda\quad \textrm{a.e.}$$
for all $\mu\in\mathcal {M}_T$, for some universal constant $\lambda>0$.

\begin{cor}\label{cor3.4}
Let $T\colon X\rightarrow X$ be a continuous transformation of the compact metric space $X$ and suppose that $A\colon X\rightarrow\mathbb{R}^{d\times d}$ is an a.e and lower-semi continuous nonsingular-matrix-valued map. If the set of ``quasi-expanding points'' of $\bA$
\begin{equation*}
\varDelta=\left\{x\in X\big{|}\liminf_{m\to\infty}\|\bA(m,x)v\|>\|v\|\;\forall v\in \mathbb{R}^d\setminus\{0\}\right\}
\end{equation*}
has total measure $1$, then $\bA$ is uniformly expanding on $X$.
\end{cor}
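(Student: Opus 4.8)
The plan is to reduce the expanding statement for $\bA$ to the contracting statement already established in Corollary~\ref{cor3.3} by passing to inverse matrices. Concretely, since $A(x)$ is nonsingular for every $x\in X$, define $B(x)=A(x)^{-1}$ and let $\bB$ be the cocycle over $T$ generated by $B$; equivalently $\bB(n,x)=A(x)^{-1}A(T(x))^{-1}\dotsm A(T^{n-1}(x))^{-1}$. The key algebraic point is the identity relating the two cocycles along an orbit: $\bA(n,x)^{-1}=A(T^{n-1}(x))^{-1}\dotsm A(x)^{-1}$, which is $\bB(n,x)$ read in the opposite order, so that after conjugating by the orbit one gets a clean correspondence between the forward behavior of $\bB$ and the inverse of the forward behavior of $\bA$. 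Thus $\|\bA(m,x)v\|>\|v\|$ for all $v\neq0$ is equivalent to $\|\bA(m,x)^{-1}w\|<\|w\|$ for all $w\neq0$, and one checks that the set $\varDelta$ of quasi-expanding points of $\bA$ translates into a set of quasi-stable points for an appropriately indexed cocycle built from $B$.

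The execution would proceed as follows. First I would verify that $x\mapsto B(x)=A(x)^{-1}$ is a.e.\ and upper-semi continuous, or rather that the relevant scalar functions $\|\bB(n,\cdot)\|$ (equivalently $\|\bA(n,\cdot)^{-1}\|$) are a.e.\ upper-semi continuous: since $A$ is a.e.\ and lower-semi continuous and nonsingular, on the set of continuity points matrix inversion is continuous, and the $\min$-norm/$\sup$-norm duality $\pmb{\|}\bA(n,x)\pmb{\|}_{\min}^{-1}=\pmb{\|}\bA(n,x)^{-1}\pmb{\|}$ converts lower-semi continuity of the former into upper-semi continuity of the latter; since $D_A$ is $\mu$-null for all $\mu\in\mathcal{M}_T$ and the discontinuity set of $\bA(n,\cdot)$ is contained in $\bigcup_{i=0}^{n-1}T^{-i}D_A$, which is still null for every $\mu\in\mathcal{M}_T$, the a.e.\ continuity hypothesis is inherited. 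Second, I would show that $\mu(\varDelta)=1$ for all $\mu\in\mathcal{M}_T$ forces the corresponding quasi-stable set for the inverse cocycle to have total measure $1$ as well — this is immediate from the pointwise equivalence $\liminf_n\|\bA(n,x)v\|>\|v\|\ \forall v\neq0 \iff \limsup_n\|\bA(n,x)^{-1}w\|<\|w\|\ \forall w\neq0$, applied orbitwise. Third, I would invoke Corollary~\ref{cor3.3} (applied to the inverse cocycle, after the mild rephrasing needed to match its exact hypotheses) to conclude that $\bA(n,x)^{-1}$ is uniformly contractive: there exist $C>0$ and $0<\lambda<1$ with $\pmb{\|}\bA(n,x)^{-1}\pmb{\|}\le C\lambda^n$ for all $x\in X$ and $n\ge0$. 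Finally, taking min-norms and inverting, $\pmb{\|}\bA(n,x)\pmb{\|}_{\min}=\pmb{\|}\bA(n,x)^{-1}\pmb{\|}^{-1}\ge C^{-1}\lambda^{-n}$, which is exactly uniform expansion of $\bA$ on $X$ with constant $\varrho=\lambda^{-1}>1$.

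The main obstacle I anticipate is bookkeeping rather than conceptual: the cocycle generated by $B=A^{-1}$ over $T$ composes in the "wrong" order relative to $\bA(n,\cdot)^{-1}$, so one must be careful that the quasi-stability set one feeds into Corollary~\ref{cor3.3} is genuinely of the form $\{x:\limsup_n\|\bC(n,x)w\|<\|w\|\ \forall w\}$ for an honest forward cocycle $\bC$ driven by $T$ (this is what the semi-uniform ergodic theorem, Lemma~\ref{lem3.1}, requires) — the cleanest fix is to work with the cocycle $C(x)=A(T^{-1}x)^{-1}$ when $T$ is invertible, or, in the general non-invertible setting, to observe that Proposition~\ref{prop3.2}'s conclusion about Lyapunov exponents and the semi-uniform argument only care about the subadditive scalar sequence $f_n(x)=\log\pmb{\|}\bA(n,x)\pmb{\|}_{\min}$ directly: one checks $-f_n$ is subadditive, a.e.\ continuous, upper-bounded, and has a.e.\ negative upper limit (by Lemma~\ref{lem2.5} applied to $\bA$ itself, all Lyapunov exponents of $\bA$ are positive, hence those of the "inverse" direction negative), and then applies Lemma~\ref{lem3.1} to $-f_n$. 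Phrasing the proof through this scalar sequence sidesteps the ordering issue entirely and is the route I would actually write down.
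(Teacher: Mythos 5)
Your final (scalar) route is in spirit the paper's own: the paper disposes of Corollary~\ref{cor3.4} with the single sentence that it follows by ``an argument similar to that of Corollary~\ref{cor3.3}'', i.e.\ run the Proposition~\ref{prop3.2} argument on the subadditive sequence $-\log\pmb{\|}\bA(n,\cdot)\pmb{\|}_{\min}=\log\pmb{\|}\bA(n,\cdot)^{-1}\pmb{\|}$ and then use semicontinuity plus compactness to extract uniform constants, exactly as you propose. However, there is a genuine gap at the one step that actually carries the weight: your claim that ``by Lemma~\ref{lem2.5} applied to $\bA$ itself, all Lyapunov exponents of $\bA$ are positive.'' The quasi-expanding hypothesis gives exponents $\ge 0$, but ruling out a \emph{zero} exponent is not delivered by Lemma~\ref{lem2.5} as stated: its property (d)-(ii) is a $\limsup$ statement, $\limsup_n e^{-\lambda_i n}\|\bA(n,x)v_i\|\ge\|v_i\|$, which is precisely what one needs against the $\limsup$-type quasi-\emph{stability} condition in Proposition~\ref{prop3.2}, but is perfectly compatible with the $\liminf$-type quasi-expansion condition $\liminf_n\|\bA(n,x)v\|>\|v\|$ (a vector whose norm returns above its initial value infinitely often contradicts nothing here). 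What the expanding case needs is the dual, $\liminf$-type conclusion --- for the slowest Oseledec layer with $\lambda_i=0$, a vector $v$ with $\liminf_n\|\bA(n,x)v\|\le\|v\|$ --- and that is not among the properties quoted in this paper; it must be imported from the finer results of \cite{Dai-pre} (or argued separately, e.g.\ by an Atkinson-type recurrence argument on the zero-exponent subbundle). Admittedly the paper's one-line proof glosses over the same asymmetry, but as written your deduction does not follow from the lemma you cite.

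Two smaller points. First, the pointwise equivalence you assert in the inverse-cocycle route, $\liminf_n\|\bA(n,x)v\|>\|v\|\ \forall v\neq0\iff\limsup_n\|\bA(n,x)^{-1}w\|<\|w\|\ \forall w\neq0$, is false for general matrix sequences: the near-minimizing direction of $\bA(n,x)$ may rotate with $n$, so every \emph{fixed} $v$ can expand while $\pmb{\|}\bA(n,x)\pmb{\|}_{\min}$ stays $\le1$ along a subsequence; you discard this route anyway, but the same direction-versus-time issue is exactly why the exponent-positivity step above cannot be waved through. Second, to feed $\varphi_n=-\log\pmb{\|}\bA(n,\cdot)\pmb{\|}_{\min}$ into Lemma~\ref{lem3.1} you need it upper bounded, i.e.\ $\inf_{x\in X}\pmb{\|}A(x)\pmb{\|}_{\min}>0$; this does not follow from nonsingularity together with the paper's entrywise notion of lower-semi continuity, so it should be stated as a hypothesis or justified (this is a defect your write-up inherits from the paper rather than introduces).
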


\begin{proof}
This statement comes from an argument similar to that of Corollary~\ref{cor3.3}.
\end{proof}

We note that if we additionally assume $A(x)$ is continuous on $X$ instead of the a.e. continuity, then Corollary~\ref{cor3.4} can be easily obtained by a simple modification of the argument of \cite{CLR}.

\subsection{Quasi-Anosov diffeomorphisms}\label{sec3.2}
Now it is time to prove Theorem~\ref{thm1.1} stated in Section~\ref{sec1}. From now on, let $f\colon M\rightarrow M$ be a $\mathrm{C}^1$-diffeomorphism of a closed manifold $M$, where the dimension $\dim M$ of $M$ is larger than or equal to $2$.

\begin{proof}[Proof of Theorem~\ref{thm1.1}]
Since $f$ is of $\mathrm{C}^1$-class, $\inf_{x\in M}\pmb{\|}T_xf|E^{cu}(x)\pmb{\|}_{\min}\ge\inf_{x\in M}\pmb{\|}T_xf\pmb{\|}_{\min}>0$. By Proposition~\ref{prop3.2} and condition (\ref{eq1.2}), it follows that $T\!f$ is (not necessarily uniformly) exponentially expanding restricted to $E^{cu}(x)$ for all $x\in M$.

In addition, by considering $f^{-1}$ instead of $f$, from Oselede\v{c}'s multiplicative ergodic theorem~\cite{Ose} and Proposition~\ref{prop3.2} we can see that $f^{-1}$ is nonuniformly contracting restricted to $E^{cu}(x)$ for every $x\in M$.

We next consider the $E^{s}$-subbundle.

\begin{asser}\label{asser3.5}
For every $\mu\in\mathcal{M}_f$, $f$ is nonuniformly contracting restricted to the $s$-subbundle $E^{s}(x)$ for $\mu$-a.e. $x\in M$.
\end{asser}

\begin{proof}
Let $E_x^{s,\sharp}=\{v\in E^{s}(x)\colon \|v\|=1\}$ and $E^{s,\sharp}=\bigsqcup_{x\in M}E_x^{s,\sharp}$ be the $s$-subbundle of unit tangent vectors to $M$. We define a natural measurable dynamical system
$$
F\colon E^{s,\sharp}\rightarrow E^{s,\sharp};\quad F(x,v)=\frac{T_xf(v)}{\|T_xf(v)\|}\; \forall x\in M\textrm{ and }v\in E_x^{s,\sharp},
$$
Since $f$ is $\mathrm{C}^1$-diffeomorphic, $F$ is well defined and continuous.
We introduce the standard potential function
$$
\varphi\colon E^{s,\sharp}\rightarrow\mathbb{R};\quad (x,v)\mapsto \varphi(v)=\log\|T_xf(v)\|\; \forall(x,v)\in E^{s,\sharp}.
$$
Since $f$ is of $\mathrm{C}^1$-class, it is easy to see that $|\varphi|$ is bounded on $E^{s,\sharp}$ such that
\begin{equation*}
\log\|T_xf^n(v)\|=\sum_{i=0}^{n-1}\varphi(F^i(x,v))\quad \forall (x,v)\in E^{s,\sharp}\textrm{ and }n\ge1.
\end{equation*}
So from the hypothesis (\ref{eq1.1}) of the theorem, it follows that
$$
\limsup_{n\to\infty}\sum_{i=0}^{n-1}\varphi(F^i(x,v))<0\quad \forall v\in E_x^{s,\sharp}\textrm{ and }\textrm{a.e. }x\in M.
$$
By $T^\sharp M$ we denote the bundle of unit tangent vectors to $M$ and we naturally extend $F$ to it.
Let $\pi\colon T^\sharp M\rightarrow M$ be the natural projection. Then $\pi_*\colon\mathcal{M}_F\rightarrow\mathcal{M}_f$ is surjective, where $\mathcal{M}_F$ stands for the set of all $F$-invariant Borel probability measures on $T^\sharp M$.

Thus for any given $F$-invariant ergodic Borel probability measure $\hat{\mu}$ on $T^\sharp M$, from Lemma~\ref{lem2.1} or Lemma~\ref{lem2.3}, it follows that
\begin{equation*}\begin{split}
\lambda(\hat{\mu})&:=\lim_{n\to\infty}\frac{1}{n}\log\|T_xf^n(v)\|\quad \hat{\mu}\textrm{-a.e. }(x,v)\in T^\#M\\
&\;<0 \qquad\textrm{if }\quad\hat{\mu}(E^{s,\sharp})>0.
\end{split}\end{equation*}
Given any ergodic $\mu\in\mathcal{M}_f$, from \cite{Dai09} follows that one can find $\dim M$ ergodic measures, say $\hat{\mu}_1,\dotsc,\hat{\mu}_{\dim M}\in\mathcal{M}_F$, such that
$\pi_*(\hat{\mu}_i)=\mu$ for $1\le i\le\dim M$ and $\{\lambda(\hat{\mu}_1),\dotsc,\lambda(\hat{\mu}_{\dim M})\}$ is just the Lyapunov characteristic spectrum of $f$ at $\mu$ counting with multiplicities. Thus
$$
\lim_{n\to\infty}\frac{1}{n}\log\|T_xf^n(v)\|<0\quad \forall v\in E_x^{s,\sharp},\quad \mu\textrm{-a.e. }x\in M,
$$
for any $\mu\in\mathcal{M}_f$. This completes the proof of Assertion~\ref{asser3.5}.
\end{proof}

By $\widehat{E}^{s}(x)$, we denote the orthogonal complement of $E^{cu}(x)$ in $T_xM$, for every $x\in M$. Since $E^{cu}(x)$ is a.e. continuous with respect to $x\in M$, so is $\widehat{E}^{s}(x)$. Let $\widehat{F}_x^{s}\colon\widehat{E}^{s}(x)\rightarrow\widehat{E}^{s}(f(x))$ be the linear isomorphism, which is naturally induced by the orthogonal projection, for each $x\in M$. According to Liao's version of multiplicative ergodic theorem, $\widehat{F}^{s}\colon\widehat{E}^{s}\rightarrow\widehat{E}^{s}$ is non-uniformly contractive for each $\mu\in\mathcal{M}_f$. Finally, Lemma~\ref{lem3.1} follows that $\widehat{F}^{s}$ is (not necessarily uniformly) contractive at each $x\in M$. Then for any $x\in M$, there exists two constants $C_x>0$ and $\lambda_x>0$ such that
\begin{equation*}
\pmb{\|}T_xf^n|E^s(x)\pmb{\|}\le C_xe^{-n\lambda_x}\quad \textrm{and}\quad\pmb{\|}\widehat{F}^{cu}_{f^{n-1}x}\circ\dotsm\circ\widehat{F}^{cu}_x|\widehat{E}^{cu}(x)\pmb{\|}_{\min}\ge C_x^{-1} e^{n\lambda_x}
\end{equation*}
for all $n\ge0$. 

Moreover by considering $f^{-1}$ instead of $f$, we can see that $f^{-1}$ is nonuniformly expanding restricted to $\widehat{E}^s(x)$ for every $x\in M$.

Therefore, for any $x\in M$ and any $u\in T_xM-(\widehat{E}^s(x)\cup E^{cu}(x))$ we have $u=u_-+u_+$ where $u_-\in \widehat{E}^s(x)$ and $u_+\in\widehat{E}^{cu}(x)\setminus\{0\}$ such that
\begin{equation*}
\|T_xf^n(u)\|=\|T_xf^n(u_+)+T_xf^n(u_-)\|\to+\infty\quad \textrm{as }n\to\infty.
\end{equation*}
Thus $f$ is quasi-Anosov.

Next we proceed to prove the second part of the theorem. By the quasi-hyperbolicity, we have the following.

\begin{asser}[\cite{M77}]\label{asser3.6}
There exists a positive integer $\N$ such that for any $v\in T_xM$ there is some $n\in[-\N,\N]$ with the property $\|T_xf^n(v)\|\ge2\|v\|$.
\end{asser}

From now on, let $\N$ be given as in Assertion~\ref{asser3.6}. We call $u(\not=0)\in T_xM$ is a \textit{right $\N$-altitude} if $\|u\|\ge\|T_xf^m(u)\|$ for all $-\N\le m\le 0$; and $u$ is called a \textit{left $\N$-altitude} if $\|u\|\ge\|T_xf^m(u)\|$ for all $0\le m\le \N$.

The above Assertion~\ref{asser3.6} immediately implies the following.

\begin{asser}\label{asser3.7}
Given any $x\in M$ and any nonzero $u\in T_xM$, for any integer $k$, there exists at least one right or left $\N$-altitude in $\left\{T_xf^{k+n}(u)\,|\,-\N\le n\le \N\right\}$.
\end{asser}

From definition and Assertion~\ref{asser3.6}, we can easily obtain the following.

\begin{asser}[{\cite[Lemma~1.3]{WG}}]\label{asser3.8}
Given any $x\in M$, if $u\in T_xM$ is a right $\N$-altitude, then for any $n\ge0$ and $m\ge0$, we have
\begin{equation*}
\|T_xf^{n+m}(u)\|\ge C^{-1}\lambda^{-m}\|T_xf^n(u)\|.
\end{equation*}
If $u\in T_xM$ is a left $\N$-altitude, then for any $n\ge0$ and $m\ge0$, we have
\begin{equation*}
\|T_xf^{-n-m}(u)\|\ge C^{-1}\lambda^{-m}\|T_xf^{-n}(u)\|.
\end{equation*}
Here
\begin{equation*}
\lambda=\left(\frac{1}{2}\right)^{\N}\quad \textrm{and}\quad C=\left(\frac{1}{2}\min\left\{\pmb{\|}T_xf^k\pmb{\|}_{\min}\colon x\in M,\;0\le k\le \N\right\}\right)^{-2}.
\end{equation*}
\end{asser}

Now let $x\in M$ be arbitrarily given. Since $\|T_xf^n(u)\|$ tends to $+\infty$ as $n\to\infty$ for any nonzero $u\in E^{cu}(x)$, we can find some integer $k(x,u)>0$ such that $T_xf^{k(x,u)}(u)$ is a right $\N$-altitude. Furthermore, since $T_xf(u)$ is continuous with respect to $x$ and $u$, we can choose some large integer $\ell(x)>0$ such that for each nonzero $u\in E^{cu}(x)$, there exists at least one right $\N$-altitude in $\left\{T_xf^n(u)\,|\,0\le n\le\ell(x)\right\}$. This together with Assertion~\ref{asser3.8} follows the second part of the theorem for the subbundle $E^{cu}$. Similarly we can prove the inequality for the subbundle $E^s$.

This thus completes the proof of Theorem~\ref{thm1.1}.
\end{proof}

\subsection{Anosov diffeomorphisms}\label{sec3.3}
We will need the following lemma which gives us a characterization of Anosov diffeomorphisms.

\begin{lem}[{\cite[Corollary~1]{M77}}]\label{lem3.9}
$f$ is Anosov if and only if it is quasi-Anosov and each periodic points of $f$ have the same stable index.
\end{lem}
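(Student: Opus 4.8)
The statement to prove is Lemma~\ref{lem3.9}: $f$ is Anosov if and only if it is quasi-Anosov and every periodic point of $f$ has the same stable index.

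\bigskip

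The plan is to prove the two implications separately, with the forward direction being essentially trivial and the reverse direction carrying all the weight. For the ``only if'' part, one notes that an Anosov diffeomorphism is tautologically quasi-Anosov: along the stable bundle $\pmb{\|}T_xf^n(v)\pmb{\|}\to0$ as $n\to+\infty$ while $\pmb{\|}T_xf^{-n}(v)\pmb{\|}\to\infty$, and symmetrically on the unstable bundle, so the two-sided orbit of any unit vector is unbounded. Moreover, for an Anosov system the stable subbundle $E^s$ is continuous and $Tf$-invariant of constant rank on all of $M$, hence in particular the stable index at each periodic orbit equals $\dim E^s$, which is a constant. So this direction requires only unwinding the definitions.

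\bigskip

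For the ``if'' part, the strategy is to invoke Ma\~n\'e's structural results for quasi-Anosov diffeomorphisms. First, since $f$ is quasi-Anosov, by Ma\~n\'e~\cite{M-LNM} it satisfies Axiom~A and the no-cycle condition; in particular the nonwandering set $\Omega(f)$ is a finite disjoint union of basic sets $\Omega_1,\dots,\Omega_k$, each admitting a hyperbolic splitting $T_{\Omega_i}M=E^s_i\oplus E^u_i$ with the dimensions $\dim E^s_i$, $\dim E^u_i$ constant on each $\Omega_i$. By the spectral decomposition, periodic points are dense in each $\Omega_i$, so the common stable index $p$ of all periodic points forces $\dim E^s_i=p$ for every $i$. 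Thus all basic sets have stable bundles of the same dimension, and likewise all unstable bundles have dimension $q=\dim M-p$.

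\bigskip

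The core step — and the main obstacle — is then to promote this uniform-index information, valid on $\Omega(f)$, to a global hyperbolic splitting on all of $M$, i.e.\ to rule out the Franks--Robinson phenomenon where stable and unstable directions ``collide'' off the nonwandering set. Here one uses the quasi-hyperbolic estimates already established in the proof of Theorem~\ref{thm1.1} (or equivalently Ma\~n\'e's characterization): for each $x\in M$ the tangent space splits measurably as $\widehat{E}^s(x)\oplus E^{cu}(x)$ with uniform two-sided exponential rates once one waits past a point-dependent time $\ell(x)$, and by Assertion~\ref{asser3.8} the constants $C,\lambda,\N$ in these estimates are uniform over $M$. The uniform-index hypothesis guarantees $\dim\widehat{E}^s(x)=p$ and $\dim E^{cu}(x)=q$ identically, with no jumps, so a standard compactness-and-cone-field argument (Ma\~n\'e~\cite{M77}, using that quasi-Anosov plus constant index prevents the dimension of the ``contracting'' subspace from dropping) shows that the splitting is in fact continuous and the contraction/expansion uniform: one obtains constants $C>0$, $\lambda>0$ with $\pmb{\|}T_xf^n|E^s(x)\pmb{\|}\le Ce^{-n\lambda}$ and $\pmb{\|}T_xf^n|E^u(x)\pmb{\|}_{\min}\ge C^{-1}e^{n\lambda}$ for all $n\ge0$ and all $x\in M$. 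This is precisely the Anosov condition, completing the proof. The delicate point is the continuity/uniformity upgrade; everything else is bookkeeping with the basic-set decomposition.
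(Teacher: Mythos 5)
Note first that the paper does not prove Lemma~\ref{lem3.9} at all: it is imported verbatim as Corollary~1 of Ma\~n\'e~\cite{M77} (the acknowledgment even credits Gan for pointing the lemma out), so there is no internal argument to compare yours against; the only question is whether your sketch stands on its own. Your ``only if'' direction is fine, and the first half of the ``if'' direction is also sound: quasi-Anosov implies Axiom~A and no cycles by \cite{M-LNM}, and density of periodic points in each basic set of the spectral decomposition forces all basic sets to have the same stable index $p$.

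The gap is exactly at the step you call the core one. First, you invoke ``the quasi-hyperbolic estimates already established in the proof of Theorem~\ref{thm1.1}'' and the splitting $\widehat{E}^s(x)\oplus E^{cu}(x)$, but that splitting is part of the \emph{hypotheses} of Theorem~\ref{thm1.1}; for an arbitrary quasi-Anosov $f$ in Lemma~\ref{lem3.9} no invariant splitting over $M$ is given, so the claim that ``the uniform-index hypothesis guarantees $\dim\widehat{E}^s(x)=p$ identically'' is a non sequitur --- constructing an invariant splitting off $\Omega(f)$ is precisely what must be proved. Second, at that decisive point you defer to ``a standard compactness-and-cone-field argument (Ma\~n\'e~\cite{M77})'', i.e.\ you cite the very result being proved, so the proof is empty at its only nontrivial step. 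The actual route in \cite{M77} is different: quasi-Anosov gives quasi-transversality, $T_xW^s(x)\cap T_xW^u(x)=\{0\}$ for every $x$ (a nonzero vector in the intersection would have a bounded two-sided $Tf$-orbit); Axiom~A plus no cycles places every $x$ in some $W^s(\Omega_i)\cap W^u(\Omega_j)$, and the equal-index hypothesis gives $\dim T_xW^s(x)+\dim T_xW^u(x)=\dim M$, so quasi-transversality yields a global $Tf$-invariant splitting $T_xM=T_xW^s(x)\oplus T_xW^u(x)$; uniform contraction and expansion on the two summands then follow from quasi-hyperbolicity of the derivative cocycle together with compactness (an argument in the spirit of Assertion~\ref{asser3.8}), which also forces continuity of the splitting. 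Since the paper itself simply cites \cite[Corollary~1]{M77}, an outright citation would have been legitimate; presenting the citation dressed as a proof, with the auxiliary appeal to Theorem~\ref{thm1.1}'s hypothesized splitting, is where the sketch falls short.
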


The following result provides us a criterion for Anosov system.

\begin{thm}\label{thm3.10}
Let $T\!{M}$ have $T\!f$-invariant splitting $T_xM=E^s(x)\oplus E^{cu}(x)\, \forall x\in M$ such that $x\mapsto E^s(x)$ is continuous and $x\mapsto E^{cu}(x)$ is measurable with respect to $x\in M$. Assume
\begin{align*}
\limsup_{m\to\infty}\|T_xf^m(v)\|<1&&\forall v\in E^s(x),\ \|v\|=1
\intertext{and}
\liminf_{m\to\infty}\|T_xf^m(v)\|>1&&\forall v\in E^{cu}(x),\ \|v\|=1
\end{align*}
for ``a.e.'' $x\in M$. Then $f$ is Anosov.
\end{thm}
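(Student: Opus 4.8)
The plan is to deduce Theorem~\ref{thm3.10} from Theorem~\ref{thm1.1} together with the periodic-point characterization of Anosov diffeomorphisms in Lemma~\ref{lem3.9}. Since the continuity of $x\mapsto E^s(x)$ trivially implies a.e.\ continuity, the hypotheses of Theorem~\ref{thm1.1} (in the form of the Remark, with the roles of $E^s$ and $E^{cu}$ exchanged so that the continuous subbundle is $E^s$) are satisfied, so $f$ is already quasi-Anosov; it remains only to check that all periodic points have the same stable index. First I would fix a periodic point $p=f^\tau(p)$ and consider the $f^\tau$-invariant (atomic) measure $\mu_p=\tfrac1\tau\sum_{i=0}^{\tau-1}\delta_{f^i(p)}\in\mathcal{M}_f$. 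Along the periodic orbit, conditions~\eqref{eq1.1}--\eqref{eq1.2} (applied at $\mu_p$-a.e.\ point, i.e.\ at every point of the orbit) force $\|T_pf^{n}(v)\|\to0$ for $v\in E^s(p)$ and $\|T_pf^{n}(v)\|\to\infty$ for $v\in E^{cu}(p)$; in fact, reading the conclusion of Theorem~\ref{thm1.1} at $x=p$ gives genuine exponential contraction on $E^s(p)$ and exponential expansion on $E^{cu}(p)$ for the derivative cocycle of $f^\tau$.

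Consequently $E^s(p)$ is contained in the stable subspace of $p$ and $E^{cu}(p)$ in the unstable subspace, whence the splitting $T_pM=E^s(p)\oplus E^{cu}(p)$ is exactly the hyperbolic stable/unstable splitting at $p$, and the stable index of $p$ equals $\dim E^s(p)$. The key point is then that $x\mapsto\dim E^s(x)$ is constant: because $x\mapsto E^s(x)$ is a continuous subbundle of $T\!M$, its fibre dimension is locally constant, and since $M$ is connected (a closed manifold; if $M$ has several components one argues componentwise, but Anosov is a pointwise hyperbolicity condition so this is harmless) the dimension $\dim E^s(x)$ is the same integer $k$ for all $x$. Therefore every periodic point has stable index $k$, and Lemma~\ref{lem3.9} yields that $f$ is Anosov.

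I expect the only genuine subtlety — and the step I would write most carefully — to be the passage from the asymptotic bounds~\eqref{eq1.1}--\eqref{eq1.2} along a periodic orbit to the assertion that $E^s(p)$ and $E^{cu}(p)$ are \emph{precisely} the stable and unstable spaces of the hyperbolic fixed point $p$ of $f^\tau$; this requires noting that a $T\!f^\tau|_{E^s(p)}$-contracting and $T\!f^\tau|_{E^{cu}(p)}$-expanding invariant splitting of $T_pM$ is unique and agrees with the standard hyperbolic splitting. One clean way to see this, staying entirely within the tools already assembled, is to invoke the second part of Theorem~\ref{thm1.1} at $x=p$: it gives constants so that $\|T_pf^{n+m}(v)\|\ge K\varrho^m\|T_pf^{n}(v)\|$ on $E^{cu}(p)$ and the mirror inequality on $E^s(p)$, which along the finite orbit upgrades to uniform exponential rates and hence identifies the indices. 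Everything else — invariance and measurability of the splitting, the reduction to Lemma~\ref{lem3.9}, connectedness of $M$ — is routine.
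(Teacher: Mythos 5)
Your proposal is correct and follows essentially the same route as the paper: deduce quasi-Anosov from Theorem~\ref{thm1.1} (in the form of the Remark), identify the stable index of periodic points with $\dim E^s$, use continuity of $E^s$ plus connectedness of $M$ to make that dimension constant, and conclude via Lemma~\ref{lem3.9}. The only difference is that you carefully justify, via the atomic measure on a periodic orbit and the quantitative conclusion of Theorem~\ref{thm1.1}, the identification $T_pW^s(p)=E^s(p)$ at periodic points, a step the paper simply asserts (for all of $\Omega(f)$) as following easily from Theorem~\ref{thm1.1}.
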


\begin{proof}
From Theorem~\ref{thm1.1}, it follows easily that $f$ is quasi-Anosov and that for any $x\in\Omega(f)$, $T_xW^s(x)=E^s(x)$. Since $E^s(x)$ is continuous with respect to $x\in M$ and $M$ is connected, $\dim E^s(x)$ is constant. Therefore $f$ is Anosov from Lemma~\ref{lem3.9}.

This completes the proof of Theorem~\ref{thm3.10}.
\end{proof}


\subsection*{\textbf{Acknowledgments}}%
The author would like to thank professor Shaobo Gan for him to point out the Lemma~\ref{lem3.9}.
This work was supported in part by National Science Foundation of China Grant $\#$11271183 and PAPD of Jiangsu Higher Education Institutions

\medskip

\end{document}